\newcommand{\be}{\begin{equation}}
\newcommand{\ee}{\end{equation}}
\newcommand{\benum}{\begin{enumerate}}
\newcommand{\eenum}{\end{enumerate}}
\newcommand{\bit}{\begin{itemize}}
\newcommand{\eit}{\end{itemize}}
\newtheorem{thom}{Theorem}[section]
\newtheorem{lemma}{Lemma}[section]
\newtheorem{prop}{Proposition}[section]
\newtheorem{defn}{Definition}[section]
\begin{document}
\def\s{\subseteq}
\def\n{\noindent}
\def\se{\setminus}
\def\dia{\diamondsuit}
\def\la{\langle}
\def\ra{\rangle}


\title{On extremal multiplicative Zagreb indices of trees with given domination number}

\author{Shaohui Wang$^a$, ~Chunxiang Wang$^b$, Jia-Bao Liu$^{c,}$\footnote{Corresponding author: S. Wang (e-mail: shaohuiwang@yahoo.com; swang@adelphi.edu), C. Wang(e-mail: wcxiang@mailccnu.edu.cn), J.B. Liu (e-mail:  liujiabaoad@163.com).}   \\
\small\emph {a. Department of Mathematics and Computer Science, Adelphi University, Garden City, NY 11550, USA}\\
\small\emph {b. School of Mathematics and Statistics, Central China Normal University, Wuhan,
430079, P.R. China}\\
\small\emph {c. School of Mathematics and Physics, Anhui Jianzhu
University, Hefei 230601, P.R. China}}
\date{}
\maketitle

\begin{abstract}

For a graph $G$, the first multiplicative Zagreb index $\prod_1$ is equal to the product of squares of the vertex degrees, and the second multiplicative Zagreb index $\prod_2$ is equal to the product of the products of degrees of pairs of adjacent vertices. The (mutiplicative) Zagreb indices have been the focus of considerable research in computational chemistry dating back to Gutman and  Trinajsti\'c in 1972.
  In this paper, we
explore the mutiplicative Zagreb indices in terms of arbitrary
domination number. The sharp upper and lower bounds of
$\prod_1(G)$ and $\prod_2(G)$ are given. In addition, the
corresponding extreme graphs are charaterized.

\vskip 2mm \noindent {\bf Keywords:}   Trees; Domination number; Extremal bounds;  Multiplicative Zagreb  indices. \\
{\bf AMS subject classification:} 05C05,  05C12
\end{abstract}

\section{Introduction}
All graphs considered in this paper are simple, connected graphs.
Let $G = (V, E)$ be such a graph, where $V = V (G)$ is its vertex
set and $E = E(G)$ is its edge set.  For $u\in V(G)$, $G-u$ is an induced subgraph of $V(G) - \{u\}$ in $G$.
A graph $G$ that has $n$
vertices and $n-1$ edges is called a tree. As usual, by $P_n$ and
$K_{1,n-1}$ denote the path and the star on $n$ vertices,
respectively.

Molecular descriptors could be helpful for QSAR/QSPR studies and
for the descriptive purposes of biological and chemical
properties, such as melting and boiling points, toxicity,
physico-chemical, and biological
properties~\cite{wei1,wei2,Gutman1996,Liu2015,LiuP2015,LiuPX2015,0001,0002}.
 One of the first topological molecular descriptors
is so-called Zagreb indices~\cite{Gutman1972}, which are auxiliary
quantities in an approximated formulae for the total
$\pi$-electron energy of conjugated molecules. Many results of
the applications on Zagreb indices were explored
in~\cite{Gutman2014}. Recently, there are hundreds of articles
investigated Zagreb indices in the area of chemistry and
mathematics~\cite{Hu2005,Li2008,shi2015,BF2014,SM2014,Xu2014,WangJ2015}.

The degree-based graph invariants $M_1(G)$ and
$M_2(G)$~\cite{Gutman1972} are defined as
 \be
\label{1}
M_1(G) = \sum_{u \in V(G)} d(u)^2,~
 M_2(G) = \sum_{uv \in E(G)} d(u)d(v).
\ee

In 2010, Todeschini et al.~\cite{RT20101,RT20102} presented the
following multiplicative variants of molecular structure
descriptors:
 \be \label{3}
\prod_1(G) = \prod_{u \in V(G)} d(u)^2,~
\prod_2(G) = \prod_{uv \in E(G)} d(u)d(v). \ee
By the recursive process, we see that $\prod_2(G) = \prod_{uv \in E(G)} d(u)d(v) = \prod_{u \in V(G)}
d(u)^{d(u)}$.

Recently, multiplicative Zagreb indices attracted extensive
attention in physics, chemistry, graph theory, etc.
 Xu and Hua~\cite{Xu20102} proposed a unified approach to
characterize extremal (maximal and minimal) trees, unicyclic
graphs and bicyclic graphs with respect to multiplicative Zagreb
indices, respectively.
Iranmanesh et al.~\cite{Iranmanesh20102} investigated these
indices the first and the second multiplicative Zagreb indices for
a class of dendrimers.
Liu and Zhang [14] introduced several sharp upper bounds for
$\pi_1$-index and $\pi_2$-index in terms of graph parameters
including the order, size and radius~\cite{Liuz20102}.
 Wang and Wei~\cite{Wang2015} studied these indices in
$k$-trees and extremal $k$-trees were characterized.
Ramin Kazemi~\cite{Ramin2016} obtained the bounds for
the moments and the probability generating function of these
indices in a randomly chosen molecular graph with tree structure
of order $n$.
Bojana Borovi\'canin et al.~\cite{Borov2016} presented upper
bounds on Zagreb indices of trees in terms of domination number. Also, a lower bound for the
first Zagreb index of trees with a given domination number is
determined and the extremal trees are characterized as well.

Motivated by the above results, in this paper we further
investigate the multiplicative Zagreb indices of trees in terms of
domination number. This enriches and extends some earlier results
obtained by Bojana Borovi\'canin et al.~\cite{Borov2016}.

The rest of the paper is organized as follows. In Section 2, we
provide some useful lemmas and preliminaries.
The lower bounds of first
multiplicative Zagreb index and upper bounds of second Zagreb
index on trees of given domination number in Section 3. The upper
bounds of first multiplicative Zagreb index and lower bounds of
second mutiplicative Zagreb index on trees of given domination
number in Section 4. 

\section{Preliminaries}

In this section, we provide some propositions and lemmas which are critical in the following proofs. 

\begin{prop}
The function $f(x) = \frac{x}{x+m}$ with $m > 0$ is increasing in $\mathbb{R}$.
\end{prop}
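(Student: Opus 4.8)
The plan is to reduce the claim to one elementary inequality. First I would rewrite $f$ in the form $f(x) = \frac{x+m-m}{x+m} = 1 - \frac{m}{x+m}$, which isolates the only non-constant piece: since $m>0$ is fixed, $f$ is increasing precisely when $x \mapsto \frac{m}{x+m}$ is decreasing, i.e.\ when $x \mapsto \frac{1}{x+m}$ is decreasing, and this is immediate wherever $x+m$ keeps a constant sign.

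To make this rigorous without invoking calculus, I would take arbitrary $x_1 < x_2$ in the domain and compute the difference directly:
\[
f(x_2) - f(x_1) = \frac{x_2}{x_2+m} - \frac{x_1}{x_1+m} = \frac{x_2(x_1+m) - x_1(x_2+m)}{(x_1+m)(x_2+m)} = \frac{m\,(x_2-x_1)}{(x_1+m)(x_2+m)}.
\]
The numerator is positive because $m>0$ and $x_2-x_1>0$, so the sign of $f(x_2)-f(x_1)$ equals the sign of $(x_1+m)(x_2+m)$; when $x_1,x_2$ lie on the same side of $-m$ this product is positive, hence $f(x_2)>f(x_1)$. As an alternative one-line argument, $f'(x) = \frac{(x+m)-x}{(x+m)^2} = \frac{m}{(x+m)^2} > 0$ for every $x \neq -m$, which gives the same conclusion.

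The only real subtlety, and the single point I would be careful to flag, is the pole at $x=-m$: strictly speaking $f$ is undefined there and is not monotone across it, so "increasing in $\mathbb{R}$" must be read as "increasing on each interval of its domain," in particular on $(-m,\infty)\supseteq(0,\infty)$. Since every later use of this proposition substitutes a vertex degree $d(u)\ge 1>0$, restricting to $x>0$ (where $x+m>0$ automatically) costs nothing, and on that range the displayed identity shows $f$ is strictly increasing. I expect the write-up to be just a few lines; there is no genuine obstacle beyond this domain caveat.
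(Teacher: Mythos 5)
Your proof is correct. The paper actually states this proposition without any proof at all, so there is nothing to compare against; your difference computation
\[
f(x_2)-f(x_1)=\frac{m\,(x_2-x_1)}{(x_1+m)(x_2+m)}
\]
is a complete and elementary justification, and the derivative remark is an equally valid one-liner. Your caveat about the pole at $x=-m$ is also a genuine (if minor) correction to the statement as written: $f$ is not defined on all of $\mathbb{R}$ and is not monotone across $-m$, so ``increasing in $\mathbb{R}$'' should be read as ``increasing on each interval of the domain,'' and in every application in the paper the argument is a vertex degree (at least $1$) or a ratio of such, so only the interval $(0,\infty)$ is ever used. In short: the proposal fills a gap the paper leaves open rather than diverging from an existing argument.
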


\begin{prop}
The function $g(x) = \frac{x^x}{(x+m)^{x+m}}$ with $m > 0$ is decreasing in $\mathbb{R}$.
\end{prop}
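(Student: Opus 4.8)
Since $g(x)>0$ on the relevant domain (here $x>0$, so that $x^x$ is well defined; in the applications $x$ will be a vertex degree, hence a positive integer), it suffices to show that $\ln g$ is decreasing, because $t\mapsto \ln t$ is strictly increasing and therefore $g$ and $\ln g$ have the same monotonicity. The plan is thus to pass to logarithms and differentiate.

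Set
\[
h(x)=\ln g(x)=x\ln x-(x+m)\ln(x+m).
\]
First I would compute $h'(x)$. Using $\frac{d}{dx}\bigl(t\ln t\bigr)=\ln t+1$ with $t=x$ and then with $t=x+m$, the two ``$+1$'' terms cancel and one is left with
\[
h'(x)=\bigl(\ln x+1\bigr)-\bigl(\ln(x+m)+1\bigr)=\ln x-\ln(x+m)=\ln\frac{x}{x+m}.
\]
Now the key observation: for $x>0$ and $m>0$ we have $0<\frac{x}{x+m}<1$ (this is immediate, and is also consistent with Proposition~2.1, which gives the precise range of $f(x)=\frac{x}{x+m}$). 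Hence $\ln\frac{x}{x+m}<0$, i.e. $h'(x)<0$ throughout, so $h=\ln g$ is strictly decreasing, and therefore so is $g$.

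The argument is short and the only genuine subtlety is the one flagged above: one must first note that $g$ is positive so that monotonicity of $\ln g$ transfers back to $g$, and one must be mindful of the domain on which $x^x$ makes sense (the statement should be read on $x>0$, which is all that is needed since the proposition is applied to vertex degrees). Beyond that, the computation of $h'$ and the sign check are routine, so there is no real obstacle here.
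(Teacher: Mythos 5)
Your proof is correct. The paper states this proposition without any proof at all, so there is nothing to compare against; your logarithmic-differentiation argument ($h(x)=x\ln x-(x+m)\ln(x+m)$, $h'(x)=\ln\frac{x}{x+m}<0$) is the standard and natural one, and you are right to flag that the statement's ``decreasing in $\mathbb{R}$'' should really be read as decreasing on $x>0$ (or on the positive integers, which is all the paper ever uses), since $x^x$ is not defined for arbitrary real $x$.
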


\begin{lemma}
(\cite{Borov2016}) Let $T$ be a tree with n vertices and
domination number $\frac{n+3}{3} \leq \gamma \leq \frac{n}{2}$.
If the value of $max  \{|d(u) - d(v)|, u,v \in V(T) \}$ is as
small as possible $($say $d(u) \in \{1, 2, 3\})$, and $n_1$ is the
number of vertices of degree 1, then $n_1 \geq 3\gamma - n$, where
the inequality is strict if there exists a vertex with two pendent
neighbors.
\end{lemma}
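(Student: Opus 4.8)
We recall that this lemma is due to~\cite{Borov2016}; the argument I would give runs as follows. Rewrite the assertion $n_1\ge 3\gamma-n$ in the form $3\,\gamma(T)\le n+n_1$, and prove this together with the refinement ``$3\,\gamma(T)\le n+n_1-1$ whenever $T$ has a vertex with at least two pendant neighbours'' by strong induction on $n$. The sharpness is explained by the degree count: since $d(v)\in\{1,2,3\}$ for every $v$, summing $3-d(v)$ over $V(T)$ gives $2n_1+n_2=3n-2(n-1)=n+2$, i.e. $n_1=n_3+2$, and this relation together with the estimates below forces the equality case to be the comb $P_k\circ K_1$. The base of the induction consists of the trees with $n\le 4$ together with all stars, each checked directly.

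For the inductive step, root $T$ at a leaf and choose a leaf $\ell$ of maximum depth; let $p$ be its neighbour and $g$ the parent of $p$. Outside the base cases $g$ exists, $d(g)\ge 2$, and every child of $p$ is a leaf. \emph{Case (a): $p$ has $j\ge 2$ leaf-children $\ell_1,\dots,\ell_j$.} Delete the pendant star $\{\ell_1,\dots,\ell_j,p\}$ to obtain a tree $T'$ on $n-j-1$ vertices, the only possible new leaf being $g$, so $n_1(T')\le n_1-j+1$. The union of $\{p\}$ with a minimum dominating set of $T'$ is a dominating set of $T$, so $\gamma(T)\le\gamma(T')+1$, and the inductive hypothesis yields $3\gamma(T)\le(n-j-1)+(n_1-j+1)+3=n+n_1-2j+3<n+n_1$. \emph{Case (b): $p$ has a single leaf-child $\ell$,} so $d(p)=2$ and $N(p)=\{\ell,g\}$. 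If $d(g)\ge 3$, delete $\{\ell,p\}$: then $n$ drops by $2$, $g$ stays a non-leaf so $n_1$ drops by exactly $1$, and with $\gamma(T)\le\gamma(T')+1$ we get $3\gamma(T)\le(n-2)+(n_1-1)+3=n+n_1$. If $d(g)=2$, let $h$ be the other neighbour of $g$ and delete $\{\ell,p,g\}$: then $n$ drops by $3$, the only possible new leaf is $h$ so $n_1$ drops by $1$ or stays fixed, and again $\gamma(T)\le\gamma(T')+1$ (the adjoined $p$ dominates $\ell$, $p$ and $g$), giving $3\gamma(T)\le(n-3)+n_1+3=n+n_1$ in the worst sub-case. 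Deleting three vertices rather than two when $d(g)=2$ is precisely what recovers the $\tfrac13$ unit that would otherwise be lost.

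For the refinement: if $T$ has a vertex $w$ with two pendant neighbours, then in each case either $w$ is itself the support vertex handled in Case (a) --- already strict --- or $w$ survives the deletion with both pendant neighbours intact, so $T'$ again meets the hypothesis of the strengthened statement and the extra unit propagates through the estimates above. I expect the real work here to be bookkeeping rather than ideas: verifying in every sub-case of Case (b) that the number of deleted vertices is tuned so that $3(\gamma(T')+1)\le n+n_1$; handling the degenerate configurations ($T'$ reduced to a single vertex, $g$ a leaf, $T$ a star, $\ell$ of depth $1$); and checking that the two-pendant vertex --- and hence the strict inequality --- is never lost on passing from $T'$ back to $T$ (when it is created rather than destroyed in $T'$, the stronger hypothesis for $T'$ is only more favourable).
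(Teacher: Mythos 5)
The paper never proves this lemma: it is imported verbatim from \cite{Borov2016} and used as a black box, so there is no in-paper argument to measure your proposal against. Judged on its own, your induction is correct, and it in fact establishes the cleaner, more general fact that $3\gamma(T)\le n+n_1(T)$ for every tree on $n\ge 2$ vertices, with strict inequality as soon as some vertex has two pendant neighbours; neither the degree restriction $d(v)\in\{1,2,3\}$ nor the window $\frac{n+3}{3}\le\gamma\le\frac{n}{2}$ is needed for the inequality, and this generality is not a luxury but a necessity, since your reduced tree $T'$ need not inherit either hypothesis --- so state the inductive claim for all trees explicitly. The three reductions are all sound: $D'\cup\{p\}$ dominates $T$ in each sub-case, the leaf counts $n_1(T')\le n_1-j+1$, $n_1(T')=n_1-1$ and $n_1(T')\le n_1$ are right, case (a) is automatically strict for $j\ge 2$, and the three-vertex deletion when $d(g)=2$ does recover the lost third, exactly as you say. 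For the strictness propagation, the one verification worth writing out is that in case (b) the vertex $w$ with two pendant neighbours is never $p$ (whose unique pendant neighbour is $\ell$) nor, when $d(g)=2$, the vertex $g$ (whose two neighbours $p$ and $h$ are non-pendant outside the $P_4$ base case), so $w$ and both of its pendant neighbours always land in $T'$ with their adjacencies intact and the strengthened hypothesis applies to $T'$; when $w$ is itself deleted it must be the support vertex of case (a), where strictness is automatic. The side remark that $n_1=n_3+2$ forces the comb in the equality case is neither needed for the lemma nor justified as written, but nothing depends on it.
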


If $\Delta(G) = 1$, $G$ is an edge by the assumption that $G$ is connected, and corresponding results are trivial.
We will conisder  graphs $G$ with $\Delta(G) \geq 2$ below.

\section{Lower bounds of first multiplicative Zagreb index and upper bounds of second Zagreb index on the trees}

In this section, we provide  sharp lower bounds of first multiplicative Zagreb index and upper bounds of second Zagreb index on trees with $n$ vertices and domination number $\gamma$. The corresponding extreme graphs are given in the following definition.

\begin{defn}
Denoted by $T_{n, \gamma}$ the tree obtained from a star graph
$K_{1, n- \gamma}$ by attaching a pendant edge to its $\gamma - 1$
pendant vertices.
\end{defn}

Let $\mathcal{T}_{n, \gamma}$ be the class of trees $T_{n, \gamma}$.
Note that $\gamma = 1$ if and only if $T \cong K_{1, n-1}$.
If $\Delta = n-\gamma$ in a tree of order $n$ and domination number $\gamma$, then $T \cong T_{n,\gamma}$. The first multiplicative Zagreb index and second multiplicative Zagreb index of $T_{n, \gamma}$ can be calculated routinely below.

\begin{prop} Let $T_{n, \gamma}$ $\in \mathcal{T}_{n, \gamma}$. Then
\begin{eqnarray}
\nonumber
\prod_1(T_{n, \gamma}) = 4^{\gamma -1}(n-\gamma)^2, ~
\prod_2(T_{n, \gamma}) = 4^{\gamma - 1} (n-\gamma)^{n-\gamma}.
\end{eqnarray}
\end{prop}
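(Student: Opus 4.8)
The plan is to read off the degree sequence of $T_{n,\gamma}$ directly from its construction and then substitute into the two defining products; the statement is essentially a bookkeeping computation, so the emphasis must be on getting the degree multiplicities exactly right.

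First I would record the construction precisely. The star $K_{1,n-\gamma}$ contributes one center of degree $n-\gamma$ together with $n-\gamma$ leaves. Attaching a pendant edge to $\gamma-1$ of these leaves (which requires $\gamma-1\le n-\gamma$, i.e. $\gamma\le\frac{n+1}{2}$, and is implicit in the definition) raises the degree of each such leaf from $1$ to $2$ and introduces $\gamma-1$ new vertices of degree $1$. Hence $T_{n,\gamma}$ has exactly one vertex of degree $n-\gamma$, exactly $\gamma-1$ vertices of degree $2$, and exactly $(n-2\gamma+1)+(\gamma-1)=n-\gamma$ vertices of degree $1$, namely the $n-2\gamma+1$ untouched star leaves together with the $\gamma-1$ newly attached pendants. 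As a consistency check, the vertex count is $1+(\gamma-1)+(n-\gamma)=n$ and the degree sum is $(n-\gamma)+2(\gamma-1)+(n-\gamma)=2n-2=2\,|E(T_{n,\gamma})|$, as it must be for a tree on $n$ vertices.

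Next I would substitute into the two indices. By definition $\prod_1(G)=\prod_{u\in V(G)}d(u)^2$, so with the degree sequence above
\[
\prod_1(T_{n,\gamma})=(n-\gamma)^2\cdot 2^{2(\gamma-1)}\cdot 1^{2(n-\gamma)}=4^{\gamma-1}(n-\gamma)^2 .
\]
For the second index I would invoke the identity $\prod_2(G)=\prod_{u\in V(G)}d(u)^{d(u)}$ recorded in the Introduction (obtained by counting, for each vertex $u$, the number of edges incident with $u$), which yields
\[
\prod_2(T_{n,\gamma})=(n-\gamma)^{n-\gamma}\cdot 2^{2(\gamma-1)}\cdot 1^{1\cdot(n-\gamma)}=4^{\gamma-1}(n-\gamma)^{n-\gamma},
\]
the claimed value.

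There is no genuine obstacle here; the only points requiring care are (i) counting the degree-$1$ vertices correctly — one must remember that the $\gamma-1$ freshly attached pendants are leaves in addition to the $n-2\gamma+1$ star leaves that were left untouched — and (ii) noting that the construction, and hence the closed form, presupposes $\gamma\le\frac{n+1}{2}$ so that enough star leaves are available to receive the $\gamma-1$ pendant edges. The degenerate case $\gamma=1$, where $T_{n,\gamma}\cong K_{1,n-1}$, is consistent with the formulas since $4^{0}=1$.
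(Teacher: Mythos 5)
Your computation is correct and is precisely the routine degree-sequence calculation the paper alludes to (it offers no explicit proof, merely asserting the values "can be calculated routinely"): one vertex of degree $n-\gamma$, $\gamma-1$ vertices of degree $2$, and $n-\gamma$ leaves, substituted into $\prod_1(G)=\prod_u d(u)^2$ and $\prod_2(G)=\prod_u d(u)^{d(u)}$. Your consistency checks on the vertex count and degree sum, and the remark that $\gamma\le\frac{n+1}{2}$ is needed for the construction, are careful additions but do not change the argument.
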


\subsection{Lower bounds of $\prod_1(G)$ on trees with domination number $\gamma$}
\begin{thom}
\label{lb}
Let $G$ be a tree with $n$ vertices and domination number $\gamma$.
 Then
\begin{eqnarray}
 \nonumber \prod_1(G)\geq 4^{\gamma-1}(n-\gamma)^2, ~\text{the equality holds if and only if} ~ G \cong T_{n, \gamma}.
\end{eqnarray}
\end{thom}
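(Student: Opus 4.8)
The plan is to show that among all trees with $n$ vertices and domination number $\gamma$, the quantity $\prod_1(G) = \prod_{u} d(u)^2$ is minimized exactly by $T_{n,\gamma}$. Since $\gamma=1$ forces $G\cong K_{1,n-1}$ and the bound is an equality there, and $\gamma = n/2$ or the small cases are easy to check directly, the interesting range is $2 \le \gamma < n/2$. The natural strategy is an extremal/transformation argument: take a tree $G$ attaining the minimum of $\prod_1$ in the class, and show it must be $T_{n,\gamma}$ by proving that, unless $G \cong T_{n,\gamma}$, one can find a degree-shifting operation that strictly decreases $\prod_1$ while keeping $n$ and $\gamma$ fixed — a contradiction.

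First I would record the elementary arithmetic fact underlying every such step: for positive integers $a\ge b\ge 2$, the "un-balancing" move $a^2 b^2 \mapsto (a+1)^2 (b-1)^2$ decreases the product when $b\ge 2$ and $a \ge b$, since $(a+1)(b-1) < ab$ whenever $a \ge b \ge 2$... wait, more carefully: $(a+1)(b-1) = ab + b - a - 1 = ab - (a-b) - 1 < ab$. So pushing degree from a smaller-degree vertex onto a larger-degree vertex (when both have degree $\ge 2$) strictly lowers $\prod_1$. Hence a minimizer should have as few vertices of degree $\ge 2$ as possible, with their degree as concentrated as possible. Concretely, I expect to argue that in a minimizer every vertex has degree $1$ or $2$ except possibly one vertex of large degree, and then analyze the "path-with-pendants" structure that this forces. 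The key combinatorial input is Lemma 2.1 (of Borovi\'canin et al.): it controls $n_1$, the number of leaves, in terms of $\gamma$ and $n$ when degrees are squeezed into $\{1,2,3\}$. Since $\prod_1 = 4^{n_2}\cdot 9^{n_3}\cdots$ once degrees are small (writing $n_i$ for the number of degree-$i$ vertices), minimizing $\prod_1$ in that regime amounts to minimizing a weighted count of non-leaves, which ties directly to making $\gamma$ as large as the structure allows; comparing against the explicit value $\prod_1(T_{n,\gamma}) = 4^{\gamma-1}(n-\gamma)^2$ from Proposition 3.1 then closes the case.

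More precisely, the sequence of steps I would carry out: (1) handle $\gamma = 1$ and the boundary/small-$n$ cases by inspection; (2) fix a minimizer $G$ and use the un-balancing inequality to show $G$ cannot contain two non-adjacent... rather, cannot contain two vertices of degree $\ge 2$ whose degrees could be un-balanced without changing $\gamma$ — this requires checking that a suitable local surgery (detach a subtree at the smaller vertex, reattach at the larger) preserves the domination number, which is where care is needed; (3) deduce that $G$ has essentially one high-degree vertex and the rest form pendant paths of length $1$ or $2$; (4) invoke Lemma 2.1 to bound the number of leaves and hence pin down the degree sequence; (5) conclude $\prod_1(G) \ge 4^{\gamma-1}(n-\gamma)^2$ with equality iff the structure is exactly $T_{n,\gamma}$, using Proposition 3.1 for the value.

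The main obstacle will be step (2): designing degree-shifting surgeries that are guaranteed not to decrease the domination number (a decrease would take us outside the class, invalidating the comparison). Domination number is not monotone under edge moves, so each surgery has to be accompanied by an argument that a minimum dominating set of the new tree is no smaller — likely by exhibiting, from any dominating set of $G'$, a dominating set of $G$ of the same size, or by a direct structural argument about which vertices must lie in any dominating set near the site of surgery. Managing this while simultaneously driving the degree sequence toward that of $T_{n,\gamma}$ is the crux; the purely arithmetic inequalities (via Propositions 2.1 and 2.2 for the ratio-monotonicity when degree-$2$ vertices cannot be eliminated) are routine by comparison.
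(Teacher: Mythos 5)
There is a genuine gap. Your plan is a surgery/extremal argument (take a minimizer, un-balance degrees while fixing $n$ and $\gamma$), which is a different route from the paper's proof --- the paper deletes the leaf end $v_1$ of a longest path and inducts on $n$, splitting on whether $\gamma(G-v_1)=\gamma(G)$ or $\gamma(G)-1$, and closes each case using $\Delta\le n-\gamma$ together with the monotonicity of $x/(x+m)$ (Proposition 2.1). The problem is that your step (2), which you yourself call the crux, is never carried out: you do not exhibit any concrete degree-shifting operation, nor any argument that such an operation preserves the domination number \emph{exactly} (it must neither decrease nor increase $\gamma$, since either change exits the class). Without that, the proposal is a plan rather than a proof. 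Moreover, this step is not merely technical bookkeeping: the unconstrained degree-sequence optimum is \emph{not} $T_{n,\gamma}$. For instance the sequence $(n-\gamma,\gamma,1,\dots,1)$ satisfies $\sum d_i=2n-2$ and $\Delta\le n-\gamma$, and gives $\prod_1=\gamma^2(n-\gamma)^2<4^{\gamma-1}(n-\gamma)^2$ for $\gamma\ge 3$; it is ruled out only because no tree with that degree sequence has domination number $\gamma$. So the entire content of the theorem lies in showing that the un-balancing process is blocked, by the domination constraint, precisely at the configuration of $T_{n,\gamma}$ (one vertex of degree $n-\gamma$, exactly $\gamma-1$ vertices of degree $2$), and that analysis is absent.

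A secondary but real misstep: you name Lemma 2.1 ($n_1\ge 3\gamma-n$ when degrees lie in $\{1,2,3\}$) as the key combinatorial input. That lemma governs the opposite regime --- degrees as balanced as possible, which is where the \emph{maximum} of $\prod_1$ lives (Theorems 4.2 and 4.4) --- and plays no role for the minimizer, whose extremal tree has a vertex of degree $n-\gamma$. Steps (3)--(4) of your outline, which squeeze degrees into $\{1,2,3\}$ and then count leaves, therefore point away from the correct extremal structure. If you want to salvage the surgery approach, the missing ingredients are: (i) a specific reattachment move (e.g., transplanting a pendant edge or pendant $P_2$ from a low-degree support vertex to the maximum-degree vertex) together with a proof that it fixes $\gamma$, and (ii) a termination argument showing the process halts exactly at $T_{n,\gamma}$. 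The paper's leaf-deletion induction avoids both difficulties and is considerably shorter.
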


\begin{proof} We first consider $\Delta(G) = 2$.
Since $n \geq 2$ and $G$ is a connected graph, then $T \cong P_n$.
For $n = 2, 3$ or $4$, $G \cong T_{2,1} (\equiv P_2), T_{3,1}(\equiv P_3)$ or $T_{4,2}(\equiv P_4)$, respevtively. By the routine caculations of $\prod_1(G)$, we have that the equality of Theorem \ref{lb}  holds. If $n \geq 5$, then the inequality of Theorem \ref{lb} holds by direct calculations of $\prod_1(G)$. Next we will consider trees with $\Delta(G) \geq 3$.

Set $P_{d+1}:= v_1v_2\dots v_{d+1}$ to be a longest
path in $G$, where $d$ is the diameter of $G$. We have
$d(v_1)=d(v_{d+1})=1.$
Let $D$ be arbitrary minimal dominating set of $G$ such that  $|D|=\gamma$.  Then
 $\triangle\leq n-\gamma$. As one can routinely calculated for $n \leq 5$, Theorem \ref{lb} is true. Now we will prove it by the induction on $n \geq 6$.  Assume that Theorem \ref{lb} holds for $|G| = n-1$, and we will show the case of $|G| = n$. There are two possible cases below.

{\bf \small Case 1.} Suppose that $\gamma(G - v_1) = \gamma(G).$ Then $v_2$ is not in the choosed domination set. By the concept of $\prod_1(G)$ and $d(v_1) = 1$, we obtain that
\begin{eqnarray}
\label{8}
\nonumber \prod_1(G) &&=\prod_{v \in V(G)} d(v)^2=(\prod_{v \in{V(G) \setminus \{v_1,v_2\}}}d(v)^2) \cdot d(v_1)^2d(v_2)^2
\\
\nonumber &&= (\prod_{v \in V(G) \setminus \{v_1, v_2\}}d(v)^2) (d(v_2)-1)^2 \frac{d(v_2)^2}{(d(v_2)-1)^2} d(v_1)^2
\\
 &&=\prod_1(G  - v_1)\cdot
\frac{d(v_2)^2}{(d(v_2)-1)^2}\cdot 1.
\end{eqnarray}

By the induction hypothesis, we have
\begin{eqnarray}
\label{9}
\nonumber \prod_1(G)
&&\geq4^{\gamma-1}(n-1-\gamma)^2\cdot \frac{d(v_2)^2}{d(v_2-1)^2}
\\  \nonumber
&&\geq4^{\gamma-1}(n-\gamma)^2\cdot \frac{(n-1-\gamma)^2}{(n-\gamma)^2}\cdot \frac{d(v_2)^2}{d(v_2-1)^2}\\
\nonumber &&= 4^{\gamma-1}(n-\gamma)^2\cdot (\frac{\frac{n-1-\gamma}{n-\gamma}}{\frac{d(v_2)-1}{d(v_2)}})^2
\\
 &&\geq4^{\gamma-1}(n-\gamma)^2.
\end{eqnarray}
Thus, Theorem \ref{lb} is proved. Based on the induction hypothesis,   equality  (\ref{9}) holds if and only if $d(v_1) = 1$ and $d(v_2) = n- \gamma$, that is, $G \cong T_{n, \gamma}$.

{\bf \small Case 2.} Suppose that $\gamma(G - v_1) = \gamma(G) - 1.$ Then $v_2$ is in every domination set and $d(v_2)=2.$ By the induction hypothesis and the concept of $\prod_1(G)$, we have that
\begin{eqnarray}
\label{10}
\nonumber \prod_1(G) &&=\prod_1(G -v_1)\cdot\frac{d(v_2)^2}{(d(v_2)-1)^2}
\\\nonumber
&&\geq4^{\gamma-2}(n-\gamma)^2\cdot (\frac{d(v_2)}{d(v_2)-1})^2
\\  \nonumber
 &&= \frac{4^{\gamma -1}}{4} \cdot (n- \gamma)^2 \cdot (\frac{2}{2-1})^2
\\
 &&=4^{\gamma-1}(n-\gamma)^2.
\end{eqnarray}
Thus, Theorem \ref{lb} is true. Based on the induction hypothesis, the relation (\ref{10})  holds if and only if $d(v_1) = 1, d(v_2) = 2$ and $G\setminus \{v_1\} \cong T_{n-1, \gamma -1}$, that is, $G \cong T_{n, \gamma}$.
Therefore, Theorem \ref{lb} is proved.
\end{proof}

\subsection{Upper bounds of $\prod_2(G)$ on trees with domination number $\gamma$}
\begin{thom}
\label{up}
Let $G$ be a tree with domination number $\gamma$. Then
\begin{eqnarray}
\label{11} \nonumber
\prod_2(G)\leq 4^{\gamma-1}(n-\gamma)^{n-\gamma}, ~\text{the equality holds if and only if}~G\cong T_{n, \gamma}.
\end{eqnarray}
\end{thom}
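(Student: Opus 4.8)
The plan is to mirror the structure of the proof of Theorem \ref{lb}, replacing $\prod_1$ by $\prod_2$ throughout and using the identity $\prod_2(G) = \prod_{u\in V(G)} d(u)^{d(u)}$ rather than the product of squares. First I would dispose of the small cases $\Delta(G)=2$ (so $G\cong P_n$) and $n\le 5$ by direct computation against $\prod_2(T_{n,\gamma}) = 4^{\gamma-1}(n-\gamma)^{n-\gamma}$ from the Proposition. Then I would proceed by induction on $n\ge 6$, again taking a longest path $P_{d+1} = v_1v_2\cdots v_{d+1}$ with $d(v_1)=1$ and splitting on whether $\gamma(G-v_1)=\gamma(G)$ or $\gamma(G-v_1)=\gamma(G)-1$.

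In both cases the key computation is the multiplicative effect on $\prod_2$ of deleting the leaf $v_1$: since $d(v_1)=1$ contributes a factor $1^1=1$, and $v_2$'s degree drops from $d(v_2)$ to $d(v_2)-1$, we get
\begin{eqnarray}
\nonumber \prod_2(G) = \prod_2(G-v_1)\cdot \frac{d(v_2)^{d(v_2)}}{(d(v_2)-1)^{d(v_2)-1}}.
\end{eqnarray}
In Case 2, where $d(v_2)=2$ and $\gamma(G-v_1)=\gamma(G)-1$, this factor is exactly $2^2/1^1 = 4$, and the induction hypothesis $\prod_2(G-v_1)\le 4^{\gamma-2}(n-1-\gamma+1)^{n-1-\gamma+1} = 4^{\gamma-2}(n-\gamma)^{n-\gamma}$ gives the bound immediately, with equality iff $G-v_1\cong T_{n-1,\gamma-1}$, i.e. $G\cong T_{n,\gamma}$. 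In Case 1, the induction hypothesis gives $\prod_2(G-v_1)\le 4^{\gamma-1}(n-1-\gamma)^{n-1-\gamma}$, so it suffices to show
\begin{eqnarray}
\nonumber \frac{d(v_2)^{d(v_2)}}{(d(v_2)-1)^{d(v_2)-1}} \le \frac{(n-\gamma)^{n-\gamma}}{(n-1-\gamma)^{n-1-\gamma}},
\end{eqnarray}
which by Proposition 2.2 (monotonicity of $g(x) = x^x/(x+m)^{x+m}$, applied with the substitution $x = d(v_2)-1$, $m=1$, so that $g$ decreasing means $x\mapsto (x+1)^{x+1}/x^x$ is increasing) follows from $d(v_2)\le n-\gamma$, which holds since $\Delta(G)\le n-\gamma$. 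Equality then forces $d(v_2)=n-\gamma$, giving $G\cong T_{n,\gamma}$.

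The main obstacle is getting the direction of monotonicity right and confirming that the auxiliary Propositions 2.1 and 2.2 are in fact the correct tools: one must check that $x\mapsto (x+1)^{x+1}/x^x$ is increasing (equivalently $g(x)=x^x/(x+1)^{x+1}$ decreasing, which is Proposition 2.2 with $m=1$) so that the degree factor is maximized exactly when $d(v_2)$ is as large as allowed. A secondary subtlety, exactly as in Theorem \ref{lb}, is the base-case bookkeeping and the equality analysis in Case 1: one needs that equality in the induction step plus equality in the degree-factor inequality together pin down $G$ uniquely as $T_{n,\gamma}$, using that equality in the inductive bound for $G-v_1$ means $G-v_1\cong T_{n-1,\gamma}$ and that reattaching $v_1$ to a vertex of degree $n-\gamma-1$ (now degree $n-\gamma$) reproduces $T_{n,\gamma}$. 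I would also double-check the edge effect $\gamma(G-v_1)\in\{\gamma(G)-1,\gamma(G)\}$ is exhaustive for a leaf, which is standard.
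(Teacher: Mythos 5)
Your proposal is correct and follows essentially the same route as the paper: delete the leaf $v_1$ of a longest path, track the factor $d(v_2)^{d(v_2)}/(d(v_2)-1)^{d(v_2)-1}$, split on whether $\gamma(G-v_1)$ equals $\gamma(G)$ or $\gamma(G)-1$, and in the first case bound the degree factor via the monotonicity in Proposition 2.2 together with $d(v_2)\le\Delta\le n-\gamma$. The equality analysis you sketch also matches the paper's.
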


\begin{proof}
We consider $\Delta(G) = 2$  firstly. As  $G$ is a connected graph with $n \geq 2$,  $T \cong P_n$.
If $n = 2, 3$ or $4$, then $G \cong T_{2,1} (\equiv P_2), T_{3,1}(\equiv P_3)$ or $T_{4,2}(\equiv P_4)$, respevtively. By the direct caculations of $ \prod_2(G)$, we have that the equality of Theorem \ref{up}  holds. If $n \geq 5$, then the inequality of Theorem \ref{up} holds by routine calculations of $\prod_2(G)$. Next we will consider trees with $\Delta(G) \geq 3$.

 Set $P_{d+1}:= v_1v_2\dots v_{d+1}$ to be a longest
path in $G$, where $d$ is the diameter of $G$. We have
$d(v_1)=d(v_{d+1})=1.$
Let $D$ be any minimal domination set of $G$ such that  $|D|=\gamma$.  Then
 $\triangle\leq n-\gamma$. As one can routinely calculated for $n \leq 5$, Theorem \ref{up} is true and we focus on $n > 6$. Now we will prove it by the induction on $n$.  We suppose that Theorem \ref{up} is true for $|G| = n-1$, and consider the case of $|G| = n$. Here we have two seperate cases.

{\bf \small Case 1.} Suppose that $\gamma(G - v_1) = \gamma(G).$ Then $v_2$ is not in the choosed domination set. By the definition  of $\prod_2(G)$ and $d(v_1) =1$, we obtain that
\begin{eqnarray}
\label{12}
\nonumber \prod_2(G) &&=\prod_{v \in V(G)} d(v)^{d(v)}=(\prod_{v \in{V(G) \setminus \{v_1,v_2\}}}d(v)^{d(v)}) \cdot d(v_1)^{d(v_1)}d(v_2)^{d(v_2)}
\\
\nonumber &&= (\prod_{v \in V(G) \setminus \{v_1, v_2\}}d(v)^{d(v)}) (d(v_2)-1)^{d(v_2)-1} \frac{d(v_2)^{d(v_2)}}{(d(v_2)-1)^{d(v_2)-1}} d(v_1)^{d(v_1)}
\\
 &&=\prod_1(G - v_1)\cdot
\frac{d(v_2)^{d(v_2)}}{(d(v_2)-1)^{d(v_2) - 1}}\cdot 1.
\end{eqnarray}

By the induction on  $|G| = n-1$, we have
\begin{eqnarray}
\label{13}
\nonumber \prod_2(G)
&&\leq4^{\gamma-1}(n-1-\gamma)^{n-1-\gamma}\cdot \frac{d(v_2)^{d(v_2)}}{(d(v_2)-1)^{d(v_2)-1}}
\\  \nonumber
&&= 4^{\gamma-1}(n-\gamma)^{n-\gamma}\cdot \frac{(n-1-\gamma)^{n-1-\gamma}}{(n-\gamma)^{n-\gamma}}\cdot \frac{d(v_2)^{d(v_2)}}{(d(v_2)-1)^{d(v_2)-1}}\\
\nonumber &&= 4^{\gamma-1}(n-\gamma)^{n-\gamma}\cdot\frac{ \frac{(n-1-\gamma)^{n-1-\gamma}}{(n-\gamma)^{n-\gamma}}}{ \frac{(d(v_2)-1)^{d(v_2)-1}}{d(v_2)^{d(v_2)}}}
\\
 &&\leq4^{\gamma-1}(n-\gamma)^{n-\gamma}.
\end{eqnarray}
Thus, Theorem \ref{up} is proved. Based on the induction hypothesis, (\ref{12}) and (\ref{13}) hold if and only if $d(v_1) = 1$ and $d(v_2) = n- \gamma$, that is, $G \cong T_{n, \gamma}$.

{\bf \small Case 2.} Suppose that $\gamma(G - v_1) = \gamma(G) - 1.$ Then $v_2$ is in every domination set and $d(v_2)=2.$ By the induction hypothesis and the definition of $\prod_2(G)$, we have that
\begin{eqnarray}
\label{14}
\nonumber \prod_2(G)
&&\leq4^{\gamma-2}(n-\gamma)^{n-\gamma}\cdot \frac{d(v_2)^{d(v_2)}}{(d(v_2)-1)^{d(v_2)-1}}
\\  \nonumber
&&= \frac{4^{\gamma-1}}{4}(n-\gamma)^{n-\gamma}\cdot  \frac{d(v_2)^{d(v_2)}}{(d(v_2)-1)^{d(v_2)-1}}\\
\nonumber &&= \frac{4^{\gamma-1}}{4}(n-\gamma)^{n-\gamma}\cdot \frac{2^2}{1^1}
\\
 &&= 4^{\gamma-1}(n-\gamma)^{n-\gamma}.
\end{eqnarray}

Thus, Theorem \ref{up} is true.  Based on the induction hypothesis,  equality (\ref{14}) holds if and only if $d(v_1) = 1, d(v_2) = 2$ and $G\setminus \{v_1\} \cong T_{n-1, \gamma -1}$, that is, $G \cong T_{n, \gamma}$.
Therefore, Theorem \ref{up} is proved.
\end{proof}

\section{Upper bounds of first multiplicative Zagreb index and lower bounds of second mutiplicative Zagreb index on the trees}

In this section, we study the upper bounds of first multiplicative Zagreb index and lower bounds of second mutiplicative Zagreb index on  trees of $n$ vertices and domination number $\gamma$. Here we first introduce some facts which are useful in the proofs of these results.

It is known that $1 \leq \gamma \leq \frac{n}{2}$, and  $\gamma(G) = 1$ if and only if $G \cong K_{1,n-1}$. Note that the path $P_n$ is a unique tree of order $n$ and $\gamma(G) = \lceil{\frac{n}{3}}\rceil$ such that $\prod_1(G)$ is maximal or $\prod_2(G)$ is minimal. In this section, we seperately consider two cases of $\gamma \leq \frac{n}{3}$ and $\frac{n}{3} < \gamma \leq \frac{n}{2}$  below (Here we keep similar notations of \cite{Borov2016,VSD}).

Let $D$ be arbitrary minimal dominating set of a tree $G$ with $n$ vertices and domination number $\gamma$, and $\overline{D} = V(T) \setminus D$. Thus, $|D| = \gamma$ and $|\overline{D}| = n- \gamma$.
Denote by $l$, $k$ or $p$ the number of edges $uv \in E(G)$ such that $u \in D$ and $v \in \overline{D}$,  $u \in D$ and $v \in {D}$, and $u \in \overline{D}$ and $v \in \overline{D}$, respectively. Since $G$ is a tree, then
\be
\label{15}
k+l+p = n-1.
\ee

By the structures of $D$ and $\overline{D}$, we have
\be
\label{16}
\sum_{u \in D} d(u) = l +2k,
\ee
\be
\label{17}
\sum_{v \in \overline{D}}d(v) = l+2p,
\ee
 \be
\label{18}
\prod_1(G) = (\prod_{u \in D} d(u)^2)(\prod_{v \in \overline{D}} d(v)^2), ~  \prod_2(G) = (\prod_{u \in D} d(u)^{ d(u)})(\prod_{v \in \overline{D}} d(v)^{d(v)}).
\ee

Based on the concept of domination number, $l \geq n - \gamma$ and (\ref{15}) yield that $k+p \leq \gamma -1$. Then
\be
\label{19}
|k-p| \leq \gamma -1.
\ee

Note that (by \cite{Iranmanesh20102,Wang2015}) the product of $d(u)^2$ (or $d(u)^{d(u)}$, respectively) with $u \in D$ necessarily attain the maximum (or minimum, respectively) if degrees $d(u)$ differ at most one among each other, i.e., if $d(u) \in \{ \lceil{\frac{l+2k}{\gamma}}\rceil, \lfloor{\frac{l+2k}{\gamma}}\rfloor\}$ for $u \in D$.
Similarly, the product of $d(v)^2$ (or $d(v)^{d(v)}$, respectively) with $v \in \overline{D}$ necessarily attain the maximum (or minimum, respectively) if  $d(v) \in
\{ \lceil{\frac{l+2p}{n-\gamma}}\rceil, \lfloor{\frac{l+2p}{n-\gamma}}\rfloor
\}$ for $v \in \overline{D}$.

Let $l+2k = q\gamma + r$, where $0 \leq r \leq \gamma - 1$, $q = \lfloor{\frac{l+2k}{\gamma}}\rfloor$ and $r = l+2k - \gamma\lfloor{\frac{l+2k}{\gamma}}\rfloor$. Based on  the relation $(\ref{16})$,  $\prod_{u \in  D} d(u)^2$ (or $\prod_{u \in D} d(u)^{d(u)}$, respevtively) is maximal (or minimal, respectively) if $D$ has $r$ vertices of degree $q+1$ and $\gamma - r$ vertices of degree $q$. Combining with the relation $l = n - 1-k-p$, we obtain
\begin{eqnarray}
\label{20}
\nonumber \prod_{u \in D} d(u)^2
&& \leq (q+1)^{2r} \cdot q^{2(\gamma -r)}
\\  \nonumber
&&= (\lfloor{\frac{l+2k}{\gamma}}\rfloor +1)^{2(l+2k - \gamma \lfloor{\frac{l+2k}{\gamma}}\rfloor)} \cdot  \lfloor{\frac{l+2k}{\gamma}}\rfloor ^ {2(\gamma - l - 2k + \gamma \lfloor{\frac{l+2k}{\gamma}}\rfloor)}
\\
\nonumber
 &&= (\lfloor{\frac{n-1+(k-p)}{\gamma}}\rfloor+1)^{2(n-1+(k-p) - \gamma \lfloor{\frac{n-1+(k-p)}{\gamma}}\rfloor)}
 \\
&&~~~~ \lfloor{\frac{n-1+(k-p)}{\gamma}}\rfloor ^{2(\gamma - n+1 -(k-p) + \gamma \lfloor{\frac{n-1+(k-p)}{\gamma}}\rfloor)},
\end{eqnarray}
\begin{eqnarray}
\label{201}
\nonumber \prod_{u \in D} d(u)^{d(u)}
&& \geq (q+1)^{(q+1)r} \cdot q^{q(\gamma -r)}
\\  \nonumber
&&= (\lfloor{\frac{l+2k}{\gamma}}\rfloor +1)^{(\lfloor{\frac{l+2k}{\gamma}}\rfloor +1)(l+2k - \gamma \lfloor{\frac{l+2k}{\gamma}}\rfloor)} \cdot  \lfloor{\frac{l+2k}{\gamma}}\rfloor ^ {\lfloor{\frac{l+2k}{\gamma}}\rfloor(\gamma - l - 2k + \gamma \lfloor{\frac{l+2k}{\gamma}}\rfloor)}
\\
\nonumber &&= (\lfloor{\frac{n-1+(k-p)}{\gamma}}\rfloor+1)^{(\lfloor{\frac{n-1+(k-p)}{\gamma}}\rfloor+1)(n-1+(k-p) - \gamma \lfloor{\frac{n-1+(k-p)}{\gamma}}\rfloor)}
 \\
&&~~~~ \lfloor{\frac{n-1+(k-p)}{\gamma}}\rfloor ^{\lfloor{\frac{n-1+(k-p)}{\gamma}}\rfloor(\gamma - n+1 -(k-p) + \gamma \lfloor{\frac{n-1+(k-p)}{\gamma}}\rfloor)}.
\end{eqnarray}

Next, let $l+2p = Q(n- \gamma) +R$, where $Q = \lfloor{\frac{l+2p}{n- \gamma}}\rfloor$ and $R= l +2p - (n- \gamma)\lfloor{\frac{l+2p}{n- \gamma}}\rfloor$. Similarly, based on (\ref{17}), $\prod_{v \in \overline{D}} d(v)^2$ ($\prod_{v \in \overline{D}} d(v)^{d(v)},$ respectively) is maximal (or minimal, respectively) if $\overline{D}$ has $R$ vertices of degree $Q+1$ and $n-\gamma -R$ vertices of degree $Q$. Combining with the relation $l = n - 1-k-p$, we have
\begin{eqnarray}
\label{21}
\nonumber \prod_{v \in \overline{D}} d(v)^2
&& \leq (Q+1)^{2R} \cdot Q^{2(n- \gamma -R)}
\\  \nonumber
&&= (\lfloor{\frac{l+2p}{n-\gamma}}\rfloor +1)^{2(l+2p - (n - \gamma) \lfloor{\frac{l+2p}{n-\gamma}}\rfloor)} \cdot  \lfloor{\frac{l+2p}{n-\gamma}}\rfloor ^ {2(n-\gamma - l - 2p + (n - \gamma) \lfloor{\frac{l+2p}{n - \gamma}}\rfloor)}
\\  \nonumber
 &&= (\lfloor{\frac{n-1+(p-k)}{n - \gamma}}\rfloor+1)^{2(n-1+(p-k) - (n - \gamma) \lfloor{\frac{n-1+(p-k)}{n-\gamma}}\rfloor)}
 \\ 
&&~~~~ \lfloor{\frac{n-1+(p-k)}{n-\gamma}}\rfloor ^{2(-\gamma +1 -(p-k) + (n-\gamma) \lfloor{\frac{n-1+(p-k)}{n-\gamma}}\rfloor)},
\end{eqnarray}
\begin{eqnarray}
\label{211}
\nonumber \prod_{v \in \overline{D}} d(v)^{d(v)}
&& \geq (Q+1)^{(Q+1)R} \cdot Q^{Q(n- \gamma -R)}
\\  \nonumber
&&= (\lfloor{\frac{l+2p}{n-\gamma}}\rfloor +1)^{(\lfloor{\frac{l+2p}{n-\gamma}}\rfloor +1)(l+2p - (n - \gamma) \lfloor{\frac{l+2p}{n-\gamma}}\rfloor)} \cdot  \lfloor{\frac{l+2p}{n-\gamma}}\rfloor ^ {\lfloor{\frac{l+2p}{n-\gamma}}\rfloor(n-\gamma - l - 2p + (n - \gamma) \lfloor{\frac{l+2p}{n - \gamma}}\rfloor)}
\\ \nonumber
 &&= (\lfloor{\frac{n-1+(p-k)}{n - \gamma}}\rfloor+1)^{(\lfloor{\frac{n-1+(p-k)}{n - \gamma}}\rfloor+1)(n-1+(p-k) - (n - \gamma) \lfloor{\frac{n-1+(p-k)}{n-\gamma}}\rfloor)}
 \\  
&&~~~~ \lfloor{\frac{n-1+(p-k)}{n-\gamma}}\rfloor ^{\lfloor{\frac{n-1+(p-k)}{n-\gamma}}\rfloor (-\gamma +1 -(p-k) + (n-\gamma) \lfloor{\frac{n-1+(p-k)}{n-\gamma}}\rfloor)}.
\end{eqnarray}

Togethering with (\ref{18}), (\ref{20}),(\ref{201}),(\ref{21})
 and (\ref{211}),  we obtain that
\begin{eqnarray}
\label{22}
\nonumber \prod_1(G)
&&=  (\prod_{u \in D} d(u)^2) (\prod_{v \in \overline{D}} d(v)^2)
\\  \nonumber
 && \leq (\lfloor{\frac{n-1+(k-p)}{\gamma}}\rfloor+1)^{2(n-1+(k-p) - \gamma \lfloor{\frac{n-1+(k-p)}{\gamma}}\rfloor)}
 \\  \nonumber
&&~~~~ \lfloor{\frac{n-1+(k-p)}{\gamma}}\rfloor ^{2(\gamma - n+1 -(k-p) + \gamma \lfloor{\frac{n-1+(k-p)}{\gamma}}\rfloor)}\\
&&~~~~\nonumber
(\lfloor{\frac{n-1+(p-k)}{n - \gamma}}\rfloor+1)^{2(n-1+(p-k) - (n - \gamma) \lfloor{\frac{n-1+(p-k)}{n-\gamma}}\rfloor)}
 \\
&&~~~~ \lfloor{\frac{n-1+(p-k)}{n-\gamma}}\rfloor ^{2(-\gamma +1 -(p-k) + (n-\gamma) \lfloor{\frac{n-1+(p-k)}{n-\gamma}}\rfloor)},
\end{eqnarray}
\begin{eqnarray}
\label{221}
\nonumber \prod_2(G)
&&=  (\prod_{u \in D} d(u)^{d(u)}) (\prod_{v \in \overline{D}} d(v)^{d(v)})
\\  \nonumber
 && \geq (\lfloor{\frac{n-1+(k-p)}{\gamma}}\rfloor+1)^{(\lfloor{\frac{n-1+(k-p)}{\gamma}}\rfloor+1)(n-1+(k-p) - \gamma \lfloor{\frac{n-1+(k-p)}{\gamma}}\rfloor)}
 \\  \nonumber
&&~~~~ \lfloor{\frac{n-1+(k-p)}{\gamma}}\rfloor ^{\lfloor{\frac{n-1+(k-p)}{\gamma}}\rfloor(\gamma - n+1 -(k-p) + \gamma \lfloor{\frac{n-1+(k-p)}{\gamma}}\rfloor)}
\\&&~~~~\nonumber
(\lfloor{\frac{n-1+(p-k)}{n - \gamma}}\rfloor+1)^{(\lfloor{\frac{n-1+(p-k)}{n - \gamma}}\rfloor+1)(n-1+(p-k) - (n - \gamma) \lfloor{\frac{n-1+(p-k)}{n-\gamma}}\rfloor)}
 \\
&&~~~~ \lfloor{\frac{n-1+(p-k)}{n-\gamma}}\rfloor ^{\lfloor{\frac{n-1+(p-k)}{n-\gamma}}\rfloor (-\gamma +1 -(p-k) + (n-\gamma) \lfloor{\frac{n-1+(p-k)}{n-\gamma}}\rfloor)}.
\end{eqnarray}

Since $n, \gamma$ are fixed, then we consider the  right-side hands of the relations (\ref{22}) and (\ref{221}) as   functions about $k-p$, say $f(k-p)$ and $g(k-p)$ with $|k-p| \leq \gamma -1$, respectively.  It is enough to find the maximal value of $f(k-p)$ and the minimal value of $g(k-p)$ below.

\subsection{Upper bounds of $\prod_1(G)$ on  trees of domination number $\gamma$}

Let $G$ be a tree of $n$ vertices and domination number $\gamma$. In order to find the maximal values of $\prod_1(G)$,  we first consider the case of $1 \leq \gamma \leq \frac{n}{3}$. The corresponding extreme graphs are given in the following definition.

\begin{defn}
$\mathcal{D}$$(n,\gamma)$ is a set of $n$-vertex trees $D_{n,\gamma}$ with domination number $\gamma$ such that $D_{n,\gamma}$ consists of the stars of orders $\lfloor{\frac{n}{\gamma}}\rfloor$ and $\lceil{\frac{n}{\gamma}}\rceil$ with exact $\gamma - 1$ pairs of adjacent leaves in neighboring stars. (See an example of Figure 1.)
\end{defn}

\begin{figure}[htbp]
    \centering
    \includegraphics[width=4in]{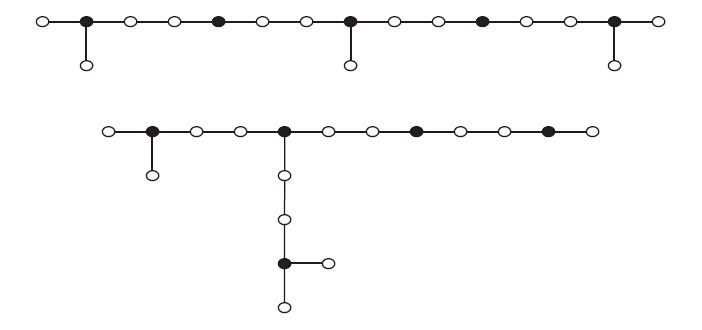}
    \caption{ Two non-isomorphic trees of $D_{18, 5}$ \cite{Borov2016}.}
    \label{fig: te}
\end{figure}

Note that the degrees of $D_{n,\gamma}$ are $n-3\gamma +2$ vertices of degree 1, $2\gamma -2$ vertices of degree 2, $2\gamma -n +\gamma\lfloor{\frac{n-\gamma}{\gamma}}\rfloor$ vertices of degree $\lfloor{\frac{n-\gamma}{\gamma}}\rfloor$  and $n-\gamma - \gamma \lfloor{\frac{n-\gamma}{\gamma}}\rfloor$ vertices of degree $\lfloor{\frac{n-\gamma}{\gamma}}\rfloor +1$, where $\lfloor{\frac{n-\gamma}{\gamma}}\rfloor$ may equal to 2. Figure 1  is an example of $D_{n,\gamma}$ such that $n=18, \gamma = 5$.

\begin{thom}
\label{up1}
Let $G$ be a tree of $n$ vertices and domination number  $1 \leq \gamma \leq \frac{n}{3}$. Then
 $$\prod_1(G) \leq 4^{2\gamma-2} \lfloor{\frac{n-\gamma}{\gamma}}\rfloor^{2(2\gamma - n + \gamma \lfloor{\frac{n-\gamma}{\gamma}}\rfloor)} (\lfloor{\frac{n-\gamma}{\gamma}}\rfloor +1)^{2(n-\gamma - \gamma \lfloor{\frac{n-\gamma}{\gamma}}\rfloor)},$$
 where the equality holds if and only if $G \cong D_{n,\gamma}$.
\end{thom}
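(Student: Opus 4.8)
The plan is to start from inequality (\ref{22}), which already bounds $\prod_1(G)$ above by a single explicit quantity $f(k-p)$ depending only on the integer $t:=k-p$; by (\ref{19}) we have $t\in\{-(\gamma-1),\dots,\gamma-1\}$, so it suffices to show that $f$ attains its maximum on this range at $t=-(\gamma-1)$ and that the value there is the expression in the statement. To make $f$ tractable, write $P(m,s):=\lfloor s/m\rfloor^{\,m-r}(\lfloor s/m\rfloor+1)^{\,r}$ with $r=s-m\lfloor s/m\rfloor$ for the product of the degrees of a ``most equal'' degree sequence of sum $s$ on $m$ vertices; then (\ref{20}) and (\ref{21}) say precisely that $f(t)=P(\gamma,\,n-1+t)^2\,P(n-\gamma,\,n-1-t)^2$. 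A short check shows that increasing the degree-sum by $1$ multiplies $P$ by the factor $(\lfloor s/m\rfloor+1)/\lfloor s/m\rfloor$ — the single vertex that changes passes from the current minimum degree to one more, even when the block of maximal-degree vertices turns over. Consequently, for $-(\gamma-1)\le t\le\gamma-2$,
\[
\frac{f(t)}{f(t+1)}=\left(\frac{q}{q+1}\cdot\frac{M}{M-1}\right)^{\!2},\qquad q:=\Big\lfloor\frac{n-1+t}{\gamma}\Big\rfloor,\quad M:=\Big\lceil\frac{n-1-t}{n-\gamma}\Big\rceil.
\]

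Next I would invoke the hypothesis $1\le\gamma\le n/3$. Since $n-1+t\ge n-\gamma\ge 2\gamma$, we get $q\ge 2$; since $n-\gamma<n-1-t\le n+\gamma-2\le 2(n-\gamma)$ (the last inequality is $3\gamma\le n+2$, which follows from $3\gamma\le n$), the argument of the ceiling lies in $(1,2]$, so $M=2$. Hence $f(t)/f(t+1)=\big(2q/(q+1)\big)^{2}>1$ because $q\ge 2$ (equivalently, by the monotonicity of $x\mapsto x/(x+m)$ recorded in the first proposition of Section~2, comparing $x=q$ with $x=M-1$). Thus $f$ is strictly decreasing on its range, so $\prod_1(G)\le f(k-p)\le f(-(\gamma-1))$. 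Substituting $t=-(\gamma-1)$, i.e.\ $l+2k=n-\gamma$ and $l+2p=n+\gamma-2$: the $\overline{D}$-factor becomes $2^{2(2\gamma-2)}\cdot 1^{2(n-3\gamma+2)}=4^{2\gamma-2}$, and the $D$-factor becomes $\lfloor\tfrac{n-\gamma}{\gamma}\rfloor^{\,2(2\gamma-n+\gamma\lfloor\frac{n-\gamma}{\gamma}\rfloor)}\,(\lfloor\tfrac{n-\gamma}{\gamma}\rfloor+1)^{\,2(n-\gamma-\gamma\lfloor\frac{n-\gamma}{\gamma}\rfloor)}$, which together give exactly the claimed bound.

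For the equality characterization, assume $\prod_1(G)$ equals the bound. Strict monotonicity of $f$ forces $k-p=-(\gamma-1)$, hence $k=0$, $p=\gamma-1$, $l=n-\gamma$; equality in (\ref{20}) and (\ref{21}) forces the degrees inside $D$ and inside $\overline{D}$ to be most equal, so the $\overline{D}$-degrees lie in $\{1,2\}$ and the $D$-degrees in $\{\lfloor\tfrac{n-\gamma}{\gamma}\rfloor,\lfloor\tfrac{n-\gamma}{\gamma}\rfloor+1\}$. From $k=0$, $D$ is an independent set; from $l=n-\gamma=|\overline{D}|$ and the domination condition, every vertex of $\overline{D}$ has exactly one neighbor in $D$, hence (its degree being at most $2$) at most one neighbor inside $\overline{D}$, so the $p=\gamma-1$ edges inside $\overline{D}$ form a matching; and since a tree contains no triangle, the two endpoints of each matching edge are adjacent to distinct vertices of $D$. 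Grouping each $u\in D$ with its $\overline{D}$-neighbors therefore displays $G$ as $\gamma$ stars of orders $d(u)+1\in\{\lfloor n/\gamma\rfloor,\lceil n/\gamma\rceil\}$ (using $\lfloor\tfrac{n-\gamma}{\gamma}\rfloor+1=\lfloor n/\gamma\rfloor$) joined by the $\gamma-1$ leaf-to-leaf matching edges, which by connectedness of $G$ must link the stars into a tree; that is, $G\cong D_{n,\gamma}$. Conversely, $\prod_1(D_{n,\gamma})$ equals the bound, by the degree count recorded just before the theorem.

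The hard part will be the bookkeeping: first, verifying that the floor-laden right-hand side of (\ref{22}) really collapses to the clean ratio above — especially the claim that a unit change of the degree-sum always rescales the most-equal product by precisely $(\lfloor s/m\rfloor+1)/\lfloor s/m\rfloor$ in all residue cases — and second, converting the arithmetic data $k=0$, $p=\gamma-1$ together with ``most-equal degrees'' into the exact structural statement that $G$ is obtained from $\gamma$ stars by $\gamma-1$ leaf-leaf edges.
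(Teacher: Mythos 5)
Your proposal is correct and follows the same overall strategy as the paper's proof: bound $\prod_1(G)$ by the function $f(k-p)$ coming from (\ref{22}), show that $f$ is decreasing on the range $|k-p|\le \gamma-1$, and evaluate at $k-p=-(\gamma-1)$, i.e. $k=0$, $p=\gamma-1$, $l=n-\gamma$. The genuine difference is in how monotonicity is established. The paper writes $f(k-p)$ as $\bigl(\tfrac{1/2}{q/(q+1)}\bigr)^{2(k-p)}$ times a ``constant'' and invokes Proposition 2.1, but since $q=\lfloor\frac{n-1+k-p}{\gamma}\rfloor$ itself depends on $k-p$, this only yields monotonicity on the subintervals where the floor is constant; the paper must then compare values across the breakpoints by hand, which is what the case split $0\le k-p\le\gamma-1$ versus $-\gamma+1\le k-p\le 0$ and the ratio computations (\ref{25}) and (\ref{29}) are for. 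Your unit-increment identity $P(m,s+1)/P(m,s)=(\lfloor s/m\rfloor+1)/\lfloor s/m\rfloor$ (which indeed survives the residue turnover, since $\lceil (s+1)/m\rceil=\lfloor s/m\rfloor+1$) gives the exact ratio $f(t)/f(t+1)=\bigl(\tfrac{q}{q+1}\cdot\tfrac{M}{M-1}\bigr)^2$ with $M=2$ and $q\ge 2$ under $\gamma\le n/3$, so $f$ is globally strictly decreasing in one stroke and the breakpoint bookkeeping disappears; I checked the two bounds $q\ge2$ and $M=2$ and they are exactly the paper's relations (\ref{q>2}) and (\ref{=1}). You also supply the structural half of the equality characterization ($k=0$ makes $D$ independent, $l=|\overline{D}|$ forces exactly one $D$-neighbour per vertex of $\overline{D}$, the $\gamma-1$ edges inside $\overline{D}$ form a leaf-to-leaf matching between distinct stars, and connectedness links the $\gamma$ stars into a tree, giving $D_{n,\gamma}$), which the paper asserts without argument. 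Both versions leave to the reader the routine converse that $D_{n,\gamma}$ has domination number exactly $\gamma$ so that the bound is attained.
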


\begin{proof} We proceed on $f(k-p)$ and determine its maximum.
If $n=3$, then $T \cong P_3$,  $\gamma = 1$ and Theorem \ref{up1} is true. If $n > 3$, as $\gamma \leq \frac{n}{3}$, then $n-\gamma \geq \frac{2n}{3} ~\text{and } \frac{\gamma -1}{n - \gamma} \leq \frac{n-3}{2n} \leq \frac{1}{2}.$ Thus, \begin{eqnarray}
\label{=1}
\lfloor\frac{n-1+p-k}{n-\gamma}\rfloor = 1.
\end{eqnarray}

Here we consider $q= \lfloor{\frac{l+2k}{\gamma}}\rfloor=\lfloor{\frac{n-1+k-p}{\gamma}}\rfloor$. Since $\frac{n-1+k-p}{\gamma} \geq \frac{n-1-\gamma +1}{ \gamma} = \frac{n-\gamma}{\gamma} \geq \frac{2n/3}{n/3} = 2$,  then
\begin{eqnarray}
\label{q>2}
q \geq 2.
\end{eqnarray}  By combing with above relations,
\begin{eqnarray}
\nonumber f(k-p) &&= (q+1)^{2(n-1+(k-p) - \gamma q)} q^{2(1-n+\gamma - (k-p) + \gamma q)} 2^{2(n-1+(p-k) - (n-\gamma) \cdot 1)} \cdot 1
\\  \nonumber
  && = (q+1)^{2(k-p)} q^{2(- (k-p) )} 2^{2(-(k-p))} \cdot (q+1)^{2(n-1- \gamma q)} q^{2(1-n+\gamma + \gamma q)} 2^{2(n-1 - (n- \gamma))}
\\  \nonumber
  && = (\frac{1/2}{q/(q+1)})^{2(k-p)}
\cdot (q+1)^{2(n-1- \gamma q)} q^{2(1-n+\gamma + \gamma q)} 2^{2(\gamma -1)}.
\end{eqnarray}
As $q \geq 2, n, \gamma$ are fixed, by Proposition 1, $f(k-p)$ is a decreasing function with the variable of $k-p$.
Since $|k-p| \leq \gamma -1$, then there are two cases below.

{\small\bf Case 1:} $0 \leq k-p \leq \gamma -1$.

Note that $\frac{n-1}{\gamma} \leq \frac{n-1+k-p}{\gamma} \leq \frac{n-1}{\gamma} + \frac{\gamma -1}{\gamma} < \frac{n-1}{\gamma} +1.$ Then we have

\be
\label{23}
\lfloor{\frac{n-1+k-p}{\gamma}}\rfloor = \lfloor{\frac{n-1}{\gamma}}\rfloor, ~\text{for}~ 0 \leq k-p \leq \gamma \lfloor{\frac{n-1}{\gamma}}\rfloor+ \gamma - n,
\ee
and
\be
\label{24}
\lfloor{\frac{n-1+k-p}{\gamma}}\rfloor = \lfloor{\frac{n-1}{\gamma}}\rfloor +1, ~\text{for}~ \gamma \lfloor{\frac{n-1}{\gamma}}\rfloor+ \gamma - n +1 \leq k-p \leq \gamma - 1.
\ee
As an addendum, the relation (\ref{23}) holds if $n = \gamma \lfloor{\frac{n-1}{\gamma}}\rfloor +1$.

 By (\ref{23}) and (\ref{24}), $k-p$ falls in two intervals and the maximum values of $f(k-p)$ arrived at either $k-p=0$ or $k-p = \gamma \lfloor{\frac{n-1}{\gamma}}\rfloor + \gamma -n +1$.
In order to find which one is bigger, we need to compare $f(\gamma \lfloor{\frac{n-1}{\gamma}}\rfloor + \gamma -n +1)$ and $f(0)$. Note that
$\gamma \lfloor{\frac{n-1}{\gamma}}\rfloor + \gamma -n +1 \geq (n - 1 -\gamma) + \gamma - n +1 = 0$.

\begin{eqnarray}
\label{25}
\nonumber \frac{f(\gamma \lfloor{\frac{n-1}{\gamma}}\rfloor + \gamma -n +1)}{f(0)} &&= \frac{(1/2)^{2(\gamma \lfloor{\frac{n-1}{\gamma}}\rfloor + \gamma -n +1)}}{(q/(q+1))^{2 \cdot 0}}
\\  \nonumber
  && = (1/2)^{2(\gamma \lfloor{\frac{n-1}{\gamma}}\rfloor + \gamma -n +1)}
\\  \nonumber
  && \leq (1/2)^{2 \cdot 0}
\\
  &&= 1.
\end{eqnarray}

Thus, $f(0)$ is maximum when $ 0 \leq k - p \leq \gamma -1$. Also,
\begin{eqnarray}
\label{26}
\nonumber f(0) &&= (q+1)^{2(n-1-\gamma -q)} q^{2(1-n+\gamma+\gamma q)} 2^{2(n-1- (n- \gamma))}
\\
  && =(\lfloor{\frac{n-1}{\gamma}}\rfloor+1)^{2(n-1- \gamma \lfloor{\frac{n-1}{\gamma}}\rfloor)} \lfloor{\frac{n-1}{\gamma}}\rfloor  ^2(1-n+\gamma + \gamma \lfloor{\frac{n-1}{\gamma}}\rfloor) 2^{2(\gamma -1)}.
\end{eqnarray}

{\small\bf Case 2:} $- \gamma +1 \leq k-p \leq 0$.

Note that $\frac{n-1}{\gamma} - 1 \leq \frac{n-\gamma}{ \gamma} \leq \frac{n-1+k-p}{\gamma} \leq \frac{n-1}{\gamma}$.
Let $n-1 =  Q\gamma +R$, where $0 \leq R \leq \gamma -1$. For $0 \leq R \leq \gamma -2$, we have

\be
\label{27}
\lfloor{\frac{n-1+k-p}{\gamma}}\rfloor = \lfloor{\frac{n-1}{\gamma}}\rfloor, ~\text{for}~ \gamma \lfloor{\frac{n-1}{\gamma}}\rfloor - n +1 \leq k-p \leq 0,
\ee
and
\be
\label{28}
\lfloor{\frac{n-1+k-p}{\gamma}}\rfloor = \lfloor{\frac{n-1}{\gamma}}\rfloor -1, ~\text{for}~ -\gamma + 1 \leq k-p \leq  \gamma \lfloor{\frac{n-1}{\gamma}}\rfloor - n.
\ee
As an addendum, the relation (\ref{27}) holds if $n = \gamma \lfloor{\frac{n-1}{\gamma}}\rfloor +1$.

Note that $f(k-p)$ is a decreasing function on these two intervals of (\ref{27}) and (\ref{28}), for $0 \leq R \leq \gamma-2$. Thus, $f(k-p)$ arrives at the maximum value for either $k-p = \gamma \lfloor{\frac{n-1}{\gamma}}\rfloor - n +1$ or $k-p = - \gamma +1$ (If $R = \gamma -1$, then $\gamma \lfloor{\frac{n-1}{\gamma}}\rfloor - n +1 = -\gamma +1$).

Note that $\gamma \lfloor{\frac{n-1}{\gamma}}\rfloor - n +1 \geq n-1-\gamma-n+1 \geq -\gamma$, $\frac{n-\gamma}{\gamma} = \lfloor\frac{n-1}{\gamma}\rfloor + \frac{R-\gamma+1}{\gamma}$ and $0 \leq R \leq \gamma -2$.
By combing above relations,
\begin{eqnarray}
\label{29}
\nonumber \frac{f(\gamma \lfloor{\frac{n-1}{\gamma}}\rfloor - n +1)}{f(-\gamma +1)}
  && = (\frac{1/2}{Q/(Q+1)})^{2((\gamma \lfloor{\frac{n-1}{\gamma}}\rfloor - n +1)) - (- \gamma +1))}\\
 && \nonumber = (\frac{1/2}{Q/(Q+1)})^{2((\gamma (\frac{n-\gamma}{\gamma} - \frac{R-\gamma+1}{\gamma}) - n +1)) - (- \gamma +1))}\\
&& \nonumber = (\frac{1/2}{Q/(Q+1)})^{2(\gamma - R -1)}\\
  &&  \leq  (\frac{1/2}{Q/(Q+1)})^{2 \cdot 1} \leq 1.
\end{eqnarray}

Finally, we need to compare these two maximum values $f(-\gamma+1)$ and $f(0)$. Since $-\gamma+1 \leq 0$ and $f(k-p)$ is decreasing, then
 the largest value of $f(k-p)$ is arrived at $k-p = - \gamma +1$ and for $k=0,$ $p= \gamma-1$ and $l = n - \gamma$. Hence, Theorem \ref{up1} is true and the equality holds if and only if $G \in \mathcal{D}_{n,\gamma}$.
\end{proof}

Next we consider the trees of $n$ vertices and domination number $\frac{n}{3} \leq \gamma \leq \frac{n}{2}$. The corresponding extreme graphs are given in the following definition.

\begin{defn}
$\mathcal{L}(n, \gamma)$ is a set of trees $L_{n,\gamma}$ with $n$ vertices and domination number $\gamma$, such that every vertex from $L_{n,\gamma}$ has at most one pendent neighbor, and
\\(i) there exists a minimum dominating set $D$ of $L_{n,\gamma}$ containing $3 \gamma - n -2$ vertices of degree 3, and $2n-4\gamma$ vertices of degree 2, while the set $\overline{D}$ contains $n- 2\gamma +2$ vertices of degree 2 and $3\gamma - n $ pendent vertices, or\\
(ii) there exists a minimum dominating set $D$ of $L_{n,\gamma}$ containing $n-2\gamma$ vertices of degree 2 and $3\gamma -n$ pedent vertices, while the set $\overline{D}$ has
$2n-4\gamma+2$ vertices of degree 2, $3\gamma-n-2$ vertices of degree 3 and any vertices from $\overline{D}$ gas exactly one neighbor in $D$. (See an example of Figure 2.)
\end{defn}

\begin{figure}[htbp]
    \centering
    \includegraphics[width=4in]{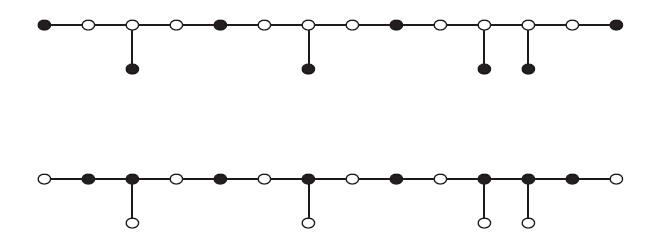}
    \caption{ Two non-isomorphic trees of $L_{28, 5}$ \cite{Borov2016}.}
    \label{fig: te}
\end{figure}

Note that $L_{n,\gamma}$ contains $3\gamma - n - 2$ vertices of degree $3$, $3n-6\gamma+2$ vertices of degree $2$ and $3\gamma-n$ vertices of degree $1$.

\begin{thom}
\label{310}
Let $G$ be a tree of $n$ vertices and domination number  $\frac{n}{3} \leq \gamma \leq \frac{n}{2}$. Then
\begin{eqnarray}
\prod_1(G) \leq
\begin{cases}
\nonumber 4^{n-2}, &\gamma = \lceil{\frac{n}{3}}\rceil,
 \cr 4^{3n-6\gamma+2} 9^{3\gamma - n -2}, &\frac{n+3}{3} \leq \gamma \leq \frac{n}{2}, \end{cases}
\end{eqnarray}
 where the equality holds if and only if $G \cong L_{n,\gamma}$.
\end{thom}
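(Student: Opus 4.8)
The plan is to reduce everything to the optimization of the function $f(k-p)$ already set up in equations (\ref{22}) and (\ref{221}), just as in Theorem \ref{up1}, but now under the hypothesis $\frac{n}{3}\leq\gamma\leq\frac{n}{2}$. First I would split into the two cases appearing in the statement. When $\gamma=\lceil\frac{n}{3}\rceil$, the only tree is a path-like structure; indeed by the remark at the start of Section 4, $P_n$ is the unique tree with $\gamma(G)=\lceil\frac{n}{3}\rceil$ maximizing $\prod_1$, so here $L_{n,\gamma}$ degenerates and $\prod_1(G)=4^{n-2}$ follows by a direct degree count ($n-2$ vertices of degree $2$, two leaves), which matches $4^{3n-6\gamma+2}9^{3\gamma-n-2}$ when $3\gamma-n-2<0$ is interpreted appropriately. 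The substantive case is $\frac{n+3}{3}\leq\gamma\leq\frac{n}{2}$.

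For that case, the key structural input is Lemma 2.3: among trees with $\frac{n+3}{3}\leq\gamma\leq\frac{n}{2}$ whose degrees lie in $\{1,2,3\}$ and differ by at most one (which is where $\prod_1$ can be largest, by the convexity/majorization facts cited from \cite{Iranmanesh20102,Wang2015}), the number $n_1$ of leaves satisfies $n_1\geq 3\gamma-n$. So I would argue: (1) to maximize $\prod_1(G)=\prod_v d(v)^2$ with $\sum_v d(v)=2(n-1)$ fixed, the degrees should be spread as much as the domination constraint allows, i.e. pushed toward the extremes; but $\Delta\leq n-\gamma$ combined with the bookkeeping $k+p\leq\gamma-1$ and $|k-p|\leq\gamma-1$ forces, once $\gamma\geq\frac{n+3}{3}$, that $\lfloor\frac{l+2k}{\gamma}\rfloor$ and $\lfloor\frac{l+2p}{n-\gamma}\rfloor$ are pinned to small values (here both essentially equal to $2$), so the attainable degrees are confined to $\{1,2,3\}$; (2) within that regime, maximizing $\prod d(v)^2$ subject to $\sum d(v)=2(n-1)$ and exactly $n_1$ leaves is the same as making as many degrees equal to $3$ as possible, which by Lemma 2.3 means $n_1=3\gamma-n$ leaves, hence $3\gamma-n-2$ vertices of degree $3$ and the rest ($3n-6\gamma+2$) of degree $2$; this yields exactly $\prod_1=4^{3n-6\gamma+2}9^{3\gamma-n-2}$. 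I would then verify directly that this degree sequence is realized by $L_{n,\gamma}$ (both variants (i) and (ii) of the definition give the same degree sequence, as the note after the definition records), and that any tree achieving equality must have precisely this degree sequence with at most one pendant neighbor per vertex, forcing $G\cong L_{n,\gamma}$.

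Concretely, in the language of $f(k-p)$: I would show that for $\frac{n+3}{3}\leq\gamma\leq\frac{n}{2}$ the floor quantities stabilize so that $f(k-p)$ again becomes (up to the constant $2^{2(\gamma-1)}$-type factor) a monotone function of $k-p$ on each relevant subinterval via Proposition 1, then locate the maximizing value of $k-p$ and translate it back through $l=n-1-k-p$, $\sum_{u\in D}d(u)=l+2k$, $\sum_{v\in\overline D}d(v)=l+2p$ into the claimed counts of degree-$1$, degree-$2$, degree-$3$ vertices. The equality discussion then mirrors the one in \cite{Borov2016}: the "at most one pendant neighbor" condition is exactly what rules out alternative trees with the same degree multiset, and the two dominating-set configurations (i), (ii) correspond to the two sign choices of $k-p$ at the optimum.

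The main obstacle I anticipate is \emph{case (2)} — showing rigorously that, once the degrees are confined to $\{1,2,3\}$, the domination constraint does not permit fewer than $3\gamma-n$ leaves, i.e. importing Lemma 2.3 correctly and checking the boundary behavior of the floor functions at $\gamma=\frac{n+3}{3}$ and at $\gamma=\frac{n}{2}$. In particular one must be careful that when $3\gamma-n-2=0$ (so no vertex of degree $3$) or when $3n-6\gamma+2=0$ the formula and the extremal graph $L_{n,\gamma}$ still make sense and coincide with the $\gamma=\lceil\frac{n}{3}\rceil$ branch; and one must confirm that the parity/divisibility conditions hidden in "$D$ contains $3\gamma-n-2$ vertices of degree $3$" are compatible with $\frac{n}{3}\leq\gamma\leq\frac{n}{2}$, which is precisely the content guaranteeing $L_{n,\gamma}$ is nonempty. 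The rest — evaluating $\prod_1(L_{n,\gamma})$ and the monotonicity bookkeeping — is routine given Propositions 1–2.
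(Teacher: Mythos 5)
Your proposal follows essentially the same route as the paper: work with the function $f(k-p)$ from (\ref{22}), show that for $\frac{n+3}{3}\leq\gamma\leq\frac{n}{2}$ the floor quantities $\lfloor\frac{n-1+k-p}{\gamma}\rfloor$ and $\lfloor\frac{n-1+p-k}{n-\gamma}\rfloor$ are pinned so the degrees lie in $\{1,2,3\}$, rewrite the bound as a function of the number of leaves $n_1$ using $n_3=n_1-2$, and invoke the lemma $n_1\geq 3\gamma-n$ to locate the optimum at $n_1=3\gamma-n$; the equality discussion via ``at most one pendant neighbor'' also matches. One sentence of your step (2) has the optimization direction backwards: since $n_3=n_1-2$ and $2^2\cdot 2^2=16>9=3^2\cdot 1^2$, the product $\prod_1=4^{n-2n_1+2}9^{n_1-2}$ is \emph{decreasing} in $n_1$, so the maximum is attained with as \emph{few} degree-$3$ vertices (equivalently, as few leaves) as possible — which is exactly why the lower bound $n_1\geq 3\gamma-n$ from the lemma is the binding constraint; ``making as many degrees equal to $3$ as possible'' would push $n_1$ upward and the lemma would give no information. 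Your concrete plan in the last paragraph (monotonicity of $f(k-p)$ via Proposition 2.1 and locating the maximizer) would recover the correct direction, so this is a slip in the heuristic rather than in the method, but it should be corrected before the argument is written out.
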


\begin{proof}
We proceed on $f(k-p)$ and determine its maximum.
If $\gamma = \lceil{\frac{n}{3}}\rceil$, $P_n$ is a path. Thus, $\prod_1(G)$ is maximal and Theorem \ref{310} is true. Here we consider the case of $\gamma \geq \frac{n+3}{3}$.

Note that $2\gamma \leq n \leq 3\gamma -3$ yields $\gamma \geq 3$ and $n \geq 6$. Also,
$1 = \frac{n-\gamma}{n-\gamma} \leq \frac{n-1+p-k}{n-\gamma} \leq \frac{n+\gamma - 2}{n-\gamma} = 1+ 2 \frac{\gamma - 1}{\gamma} < 3.$
 Thus,
\be
\label{123}
q_1 = \lfloor{\frac{n-1+p-k}{n-\gamma}}\rfloor = 1 ~\text{or}~ q_1 = \lfloor{\frac{n-1+p-k}{n-\gamma}}\rfloor = 2.
\ee

{\bf \small Case 1:} $q_1 = \lfloor{\frac{n-1+p-k}{n-\gamma}}\rfloor = 1$.

Note that $1  \leq \frac{n-1+p-k}{n-\gamma} < 2$ yields $p - k < n - 2\gamma +1,$ i.e., $$k-p \geq 2\gamma -n.$$
If $2\gamma - n \leq -1,$ then $2 \leq \frac{n-1}{\gamma} \leq 3 \frac{n-1}{n+3} < 3$. Thus, $\lfloor{\frac{n-1}{\gamma}}\rfloor = 2,$ for $2\gamma - n \leq -1$.
Now we first consider $2\gamma -n \leq k-p \leq 0$. By the same ideas of the relations (\ref{27}) and (\ref{28}), we have

\be
\label{32}
\lfloor{\frac{n-1+k-p}{\gamma}}\rfloor = \lfloor{\frac{n-1}{\gamma}}\rfloor = 2, ~\text{for}~ 2\gamma  - n +1 \leq k-p \leq 0,
\ee
and
\be
\label{33}
\lfloor{\frac{n-1+k-p}{\gamma}}\rfloor = \lfloor{\frac{n-1}{\gamma}}\rfloor -1 = 1, ~\text{for}~  k-p =  2\gamma  - n.
\ee

Since $q_1 = \lfloor{\frac{n-1+p-k}{n-\gamma}}\rfloor = 1$, then the relation (\ref{32}) holds only. Otherwise, $P_n$ is a counter-example by $\gamma \geq \frac{n+3}{3}$.

Then
\begin{eqnarray}
\label{34}
\nonumber f(k-p) && = (2+1)^{2(n-1+k-p-2\gamma)} 2^{2(\gamma - n +1 - (k-p) +2\gamma)} (1+1)^{2(n-1 +(p-k) - (n-\gamma))} 1
\\
  &&  =(3/4)^{2(k-p)} 3^{2(n-1-2\gamma)} 2^{2(4\gamma-n)},~\text{for}~ 2\gamma -n+1 \leq k-p \leq 0.
\end{eqnarray}

Next assume that $0 \leq k-p \leq \gamma -1.$ By the same ideas of the relations (\ref{23}) and (\ref{24}), we have

\be
\label{35}
\lfloor{\frac{n-1+k-p}{\gamma}}\rfloor = \lfloor{\frac{n-1}{\gamma}}\rfloor =2, ~\text{for}~ 0 \leq k-p \leq 3\gamma  - n,
\ee
and
\be
\label{36}
\lfloor{\frac{n-1+k-p}{\gamma}}\rfloor = \lfloor{\frac{n-1}{\gamma}}\rfloor +1 = 3, ~\text{for}~ 3\gamma  - n +1 \leq k-p \leq \gamma - 1.
\ee

By  the relations (\ref{22}), (\ref{35}) and (\ref{36}), we have

 \be
\label{37}
f(k-p) =  (3/4)^{2(k-p)} 3^{2(n-1-2\gamma)} 2^{2(4\gamma-n)}, ~\text{for}~ 0 \leq k-p \leq 3\gamma  - n,
\ee
and
\be
\label{38}
f(k-p) =  (2/3)^{2(k-p)}  2^{2(2n-5\gamma -3)} 3^{2(4\gamma - n +1)}, ~\text{for}~ 3\gamma  - n +1 \leq k-p \leq \gamma - 1.
\ee

Togethering with the relations (\ref{34}) and (\ref{37}), we have
\begin{eqnarray}
\label{39}
 f(k-p)   =(3/4)^{2(k-p)} 3^{2(n-1-2\gamma)} 2^{2(4\gamma-n)},~\text{for}~ 2\gamma -n+1 \leq k-p \leq 3\gamma -n.
\end{eqnarray}

By the relations  (\ref{38}) and (\ref{39}), we have  $\frac{f(3\gamma -n)}{f(3\gamma -n+1)} = \frac{16}{9} > 1$.
Since the minimal value $f(3\gamma -n)$ of the relation (\ref{39}) is bigger than the maximum value $f(3\gamma -n+1)$ of the relation (\ref{38}). In order to find the maximum value of $f(k-p)$, we should consider the relation  (\ref{39}) only.

To find the sharp upper bound of $\prod_1(G)$, where $G$ is a tree with  $n$ vertices and domination n   number $\gamma$. It is enough to find the maximum realizable value of $k-p$, such that the corresponding tree exists. We will proceed on these steps below.

 First, note that an extreme tree $G$ with a maximum $\prod_1(G)$ contains vertices of degree $1, 2$ or $3$.
By the above considerations, any minimal dominating set $D$ has $n_3$ verices of degree $3$ and $n_2$ vertices of degree $2$, i.e., $n_2+n_3 = \gamma$. Also, the set $V(G) \setminus D$ has $n_1$ vertices of degree $1$ and $\overline{n_2}$ vertices of degree $2$, i.e., $\overline{n_2} + n_1 = n- \gamma.$

As $n = n_1+n_2+\overline{n_2}+n_3,$ the relation $(1)$ can be written as $n_1+2(n_2+\overline{n_2}) + 3n_3 = 2(n_1+n_2+\overline{n_2}+n_3) - 2$. Thus,
 \begin{eqnarray}
\label{40}
n_3 = n_1 -2.
\end{eqnarray}

Combining with these relations, we have $n_2 - \overline{n_2} = 2\gamma - n +2$. By using  (\ref{40}), the relations  (\ref{16}) and  (\ref{17}) could be $n-1+k-p = 2n_2 +3n_1-6$ and $n-1+p-k = 2\overline{n_2}+n_1$. Thus, $k-p= n_1+2\gamma-n-1$.

 Thus,  the function  (\ref{39}) can be expressed as
  \begin{eqnarray}
\label{41}
f(n_1) =3^{2(n_1-2) 2^{2(n-2n_1+2)}} =  (3/4)^{2n_1}3^{-2}2^{2n+4}, ~\text{for}~ 2 \leq n_1 \leq \gamma+1.
\end{eqnarray}

Now we turn to the case of $2\gamma - n = 0$, i.e., $\gamma = n/2$ if $n$ is even. Then $\lfloor{\frac{n-1}{\gamma}}\rfloor = 1$ and similar to the relations  (\ref{23}) and  (\ref{24}), we have $q = \lfloor{\frac{n-1+k-p}{\gamma}}\rfloor = \lfloor{\frac{n-1}{\gamma}}\rfloor = 1$ for $k-p=0$ and $q = \lfloor{\frac{n-1}{\gamma}}\rfloor +1 =2,$ for $1 \leq k-p \leq \frac{n}{2} -1.$
Recall that at the same time $q_1 = 1$ and, consequently, it has to be $q =2$ (since for $q = 1, T \cong P_n,$ a contadiction, as $\gamma \geq \frac{n+3}{3}$).

By the same method above, we have $f(n_1) =  (3/4)^{2n_1}3^{-2}2^{2n+4},$ for $2 \leq n_1 \leq \frac{n}{2}$.

Thus, we should find the minimal value of $n_1$ such that there exists such trees with n vertices and domination number $\gamma$ with
\begin{eqnarray}
\label{42}
\frac{n+3}{3} \leq \gamma \leq \frac{n}{2}.
\end{eqnarray}
 Note that the vertices from any dominating set $D$ of $G$ have degrees 2 and 3, and the vertices $\overline{D}$ have degrees 1 and 2.

By Lemma 1.1, we have $n_1 \geq 3\gamma -n.$ Then the maximal possible value of $f(n_1)$ is achieved for $n_1 = 3\gamma-n$,
 i.e., $k-p = 5\gamma -2n-1$ and $f(5\gamma-2n-1) = 4^{3n-6\gamma+2} 9^{3\gamma - n -2}$. In addition, the extreme graphs of achieving the equality in Theorem \ref{310} belong to $\mathcal{L}(n,\gamma)$.

{\bf \small Case 2:} $q_1 = \lfloor{\frac{n-1+p-k}{n-\gamma}}\rfloor = 2.$

Note that $2 \leq \frac{n-1+p-k}{n-\gamma} < 3$ yields that $p-k \geq n-2\gamma +1$, that is $k-p \leq 2\gamma -n -1 < 0$. Also, $1 \leq \frac{n-1-\gamma+1}{\gamma} \leq \frac{n-1+k-p}{\gamma} \leq \frac{n-1+2\gamma-n-1}{\gamma} = 2 \frac{\gamma -1}{\gamma} < 2$ implies that $q = \lfloor{\frac{n-1+k-p}{\gamma}}\rfloor = 1.$

For $p-k = n - 2\gamma+1$ and any minimal dominating set $D$, $\frac{n-1+p-k}{n-\gamma} = 2$ and all vertices of $\overline{D}$ are degree of 2. If the vertices of $D$ are degree of $1$  or  $2$, then $T \cong P_n$ and it is contradicted with the assumption.
By $\gamma \geq \frac{n+3}{3}$, we have $p-k \geq n-2\gamma+2,$ i.e., $k-p \leq 2\gamma - n -2$.

Thus, we have $$f(k-p) = (4/3)^{2(k-p)} 2^{2(3n-4\gamma+1)} 3^{2(2\gamma - n+1)},~  \text{for}~ -\gamma +1 \leq k-p \leq 2\gamma - n - 2.$$ Next we need to determine the minimum realization of $k-p$ such that the related tree exists. Here we will proceed in by the same method of previous case.
Let $n_1, n_2$ be the number of vertices of degrees 1 and 2, respectively. By the routinely procedure, we have
$n_2- \overline{n_2} = 2\gamma - n -2$ and $k - p= 2\gamma - n - n_1+1.$

Now the function $f(k-p)$ can be writen as
 \begin{eqnarray}
\label{43}
f(n_1) = (3/4)^{2n_1} 2^{2(n+2)} 3^{2(-2)}, \text{ for } 3\leq n_1 \leq 3\gamma - n.
\end{eqnarray}

By Lemma 1.1, we have $n_1 \geq 3\gamma - n$. Thus, $n_1 = 3\gamma -n$ is the unique one such that $f(n_1)$ is maximal. Then
there is a corresponding tree with $n$ vertices and domination number $\frac{n+3}{3} \leq \gamma \leq \frac{n}{2}$ such that the vertices in any minimal dominating set $D$ have degrees $1$ and $2$, and the vertices in $\overline{D}$ have degrees $2$ and $3$.

Thus, $f(3\gamma-n) = 4^{3n-6\gamma+2} 9^{3\gamma - n -2}$ is the unique value and is the maximal value of $f(k-p)$ in Case 1. Now that $n_1 = 3\gamma - n$ yields that  $k-p = - \gamma +1$. By the relations (\ref{15}) and (\ref{19}), we have $k=0, p=\gamma-1$ and $l = n-\gamma$.

By the definition of the domination number, a vertex with more
than one pendent neighbor belongs to every minimum dominating set
of a tree, implying that every vertex in a tree T, obtained as
described above, has at most one pendent neighbor. By previous
considerations, the resulting extremal trees, for which equality
holds in Theorem \ref{310}, belong to the graphs in Definition 3.2 (ii).

This completes the proof.
\end{proof}

\subsection{Lower bounds of $\prod_2(G)$ on  trees of domination number $\gamma$}

Let $G$ be a tree of $n$ vertices and domination number $\gamma$. In order to find the minimal values of $\prod_2(G)$,  we first consider the case of $1 \leq \gamma \leq \frac{n}{3}$. The corresponding extreme graphs are given in Definition 2.

\begin{thom}
\label{up11}
Let $G$ be a tree of $n$ vertices and domination number  $1 \leq \gamma \leq \frac{n}{3}$. Then
 $$\prod_2(G) \geq 4^{2\gamma-2} \lfloor{\frac{n-\gamma}{\gamma}}\rfloor^{\lfloor{\frac{n-\gamma}{\gamma}}\rfloor(2\gamma - n + \gamma \lfloor{\frac{n-\gamma}{\gamma}}\rfloor)} (\lfloor{\frac{n-\gamma}{\gamma}}\rfloor +1)^{\lfloor{\frac{n-\gamma}{\gamma}}\rfloor(n-\gamma - \gamma \lfloor{\frac{n-\gamma}{\gamma}}\rfloor)},$$
  where the equality holds if and only if $G \cong D_{n,\gamma}$.
\end{thom}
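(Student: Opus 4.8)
The plan is to mirror the proof of Theorem~\ref{up1}, working now with the function $g(k-p)$ given by the right-hand side of~(\ref{221}) and replacing Proposition~1 by Proposition~2; since the extremal graph is again $D_{n,\gamma}$, I expect $g(k-p)$ to be \emph{increasing} on $|k-p|\le\gamma-1$, so that its minimum is attained at the left endpoint $k-p=-\gamma+1$, the parameter value describing $D_{n,\gamma}$. First I dispose of the small cases: $n=3$ forces $T\cong P_3$ and $\gamma=1$, and the remaining $n\le 5$ are direct checks. For $n>3$ with $\gamma\le n/3$ I reuse from the set-up of Theorem~\ref{up1} that $q_1:=\lfloor\frac{n-1+p-k}{n-\gamma}\rfloor=1$ (relation~(\ref{=1})) and $q:=\lfloor\frac{n-1+k-p}{\gamma}\rfloor\ge 2$ (relation~(\ref{q>2})). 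Substituting $q_1=1$ into~(\ref{221}) — so the $\overline D$-part contributes only $1^1$ from the leaves and $2^2$ from the degree-$2$ vertices — and collecting the $(k-p)$-dependent factors gives
\begin{eqnarray}
\nonumber g(k-p) &&=\left(\frac{(q+1)^{q+1}}{q^{q}\,2^{2}}\right)^{k-p}(q+1)^{(q+1)(n-1-\gamma q)}\,q^{\,q(\gamma-n+1+\gamma q)}\,2^{2(\gamma-1)},
\end{eqnarray}
where the last three factors depend only on $q,n,\gamma$.

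Next, Proposition~2 with $m=1$ says $\frac{q^{q}}{(q+1)^{q+1}}$ is decreasing, hence for $q\ge 2$ it is at most $\frac{2^{2}}{3^{3}}=\frac{4}{27}<\frac14$, so the base $\frac{(q+1)^{q+1}}{4q^{q}}>1$ and $g(k-p)$ is increasing in $k-p$ on every interval on which $q$ is constant. I then partition $|k-p|\le\gamma-1$ into the sub-intervals where $\lfloor\frac{n-1+k-p}{\gamma}\rfloor$ is constant (the analogues of~(\ref{23})--(\ref{28}), together with the corner case $n=\gamma\lfloor\frac{n-1}{\gamma}\rfloor+1$), and compare $g$ at consecutive breakpoints (the analogues of~(\ref{25}) and~(\ref{29}), with each ratio now $\ge 1$ rather than $\le 1$, since it is a power of $\frac{(q+1)^{q+1}}{4q^q}>1$) to conclude that the global minimum of $g$ over the whole range sits at $k-p=-\gamma+1$. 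By~(\ref{15}) and~(\ref{19}) this corresponds to $k=0$, $p=\gamma-1$, $l=n-\gamma$.

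At $k-p=-\gamma+1$ one has $q=\lfloor\frac{n-\gamma}{\gamma}\rfloor$ and $q_1=1$; substituting these into $g(-\gamma+1)$ reproduces $\prod_2(D_{n,\gamma})$, i.e.\ the right-hand side of Theorem~\ref{up11}. For the equality case: equality in~(\ref{201}) and~(\ref{211}) forces the degrees inside $D$ (resp.\ inside $\overline D$) to differ by at most one, and combining this with $k=0,\,p=\gamma-1,\,l=n-\gamma$ pins the degree sequence down to exactly the one recorded after Definition~3.3 — $n-3\gamma+2$ leaves, $2\gamma-2$ vertices of degree $2$, and the remaining vertices of degree $\lfloor\frac{n-\gamma}{\gamma}\rfloor$ or $\lfloor\frac{n-\gamma}{\gamma}\rfloor+1$ — whose only tree realization with these part-sizes and adjacency counts is $D_{n,\gamma}$ (the same realizability argument as in~\cite{Borov2016}).

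I expect the main obstacle to be the same bookkeeping as in Theorem~\ref{up1}: carefully tracking the floor functions across the several sub-intervals of $k-p$ and the corner case $n=\gamma\lfloor\frac{n-1}{\gamma}\rfloor+1$, and verifying that, with $g$ now increasing rather than decreasing, the chain of breakpoint comparisons consistently pushes the minimum to the \emph{left} endpoint and that no interior breakpoint yields a strictly smaller value. A secondary point needing care is the ``if and only if'': equality must be shown to force both $k-p=-\gamma+1$ and the per-part degree balancing simultaneously, after which realizability singles out $D_{n,\gamma}$.
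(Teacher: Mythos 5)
Your proposal is correct and follows essentially the same route as the paper's own proof: both work with $g(k-p)$ from (\ref{221}) under $q_1=1$ and $q\ge 2$, use Proposition~2 to show the $(k-p)$-dependent base $\frac{(q+1)^{q+1}}{4q^q}$ exceeds $1$ so that $g$ is increasing on each sub-interval where the floor is constant, compare values at the breakpoints (the analogues of (\ref{25}) and (\ref{29}) with the inequalities reversed), and conclude the minimum sits at $k-p=-\gamma+1$, i.e.\ $k=0$, $p=\gamma-1$, $l=n-\gamma$, with equality forcing $G\cong D_{n,\gamma}$. Your explicit verification that $\frac{q^q}{(q+1)^{q+1}}\le\frac{4}{27}<\frac14$ for $q\ge 2$ is a slightly more concrete justification of the monotonicity than the paper's bare appeal to Proposition~2, but the argument is the same.
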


\begin{proof} We proceed on $g(k-p)$ and determine its minimum.
If $n=3$, then $T \cong P_3$,  $\gamma = 1$ and Theorem \ref{up11} is true. If $n > 3$, as $\gamma \leq \frac{n}{3}$, then $n-\gamma \geq \frac{2n}{3} ~\text{and } \frac{\gamma -1}{n - \gamma} \leq \frac{n-3}{2n} \leq \frac{1}{2}.$
By the relations (\ref{=1}) and (\ref{q>2}), we have
\begin{eqnarray}
\nonumber g(k-p) &&= (q+1)^{(q+1)(n-1+(k-p) - \gamma q)} q^{q(1-n+\gamma - (k-p) + \gamma q)} 2^{2(n-1+(p-k) - (n-\gamma) \cdot 1)} \cdot 1
\\  \nonumber
  && = (q+1)^{(q+1)(k-p)} q^{q(- (k-p) )} 2^{2(-(k-p))} \cdot (q+1)^{(q+1)(n-1- \gamma q)} q^{q(1-n+\gamma + \gamma q)} 2^{2(n-1 - (n- \gamma))}
\\  \nonumber
  && = (\frac{1^1/2^2}{q^q/(q+1)^{(q+1)}})^{(k-p)}
\cdot (q+1)^{(q+1)(n-1- \gamma q)} q^{q(1-n+\gamma + \gamma q)} 2^{2(\gamma -1)}.
\end{eqnarray}
As $q \geq 2, n, \gamma$ are fixed, by Proposition 2, $g(k-p)$ is an increasing function with the variable of $k-p$.
Since $|k-p| \leq \gamma -1$, then there are two cases below.

{\small\bf Case 1:} $0 \leq k-p \leq \gamma -1$.

 By the relations (\ref{23}) and (\ref{24}), $k-p$ falls in two intervals and the minimum values of $g(k-p)$ arrived at either $k-p=0$ or $k-p = \gamma \lfloor{\frac{n-1}{\gamma}}\rfloor + \gamma -n +1$.
In order to find which one is bigger, we need to compare $g(\gamma \lfloor{\frac{n-1}{\gamma}}\rfloor + \gamma -n +1)$ and $g(0)$. Note that
$\gamma \lfloor{\frac{n-1}{\gamma}}\rfloor + \gamma -n +1 \geq (n - 1 -\gamma) + \gamma - n +1 = 0$.
\begin{eqnarray}
\label{255}
\nonumber \frac{g(\gamma \lfloor{\frac{n-1}{\gamma}}\rfloor + \gamma -n +1)}{g(0)} &&= \frac{(1^1/2^2)^{(\gamma \lfloor{\frac{n-1}{\gamma}}\rfloor + \gamma -n +1)}}{(q^q/(q+1)^{(q+1)})^{ 0}}
\\  \nonumber
  && = (1^1/2^2)^{(\gamma \lfloor{\frac{n-1}{\gamma}}\rfloor + \gamma -n +1)}
\\  
  && \geq (1^1/2^2)^{ 0}
= 1.
\end{eqnarray}

Thus, $g(0)$ is minimum when $ 0 \leq k - p \leq \gamma -1$. Also,
\begin{eqnarray}
\label{265}
\nonumber g(0) &&= (q+1)^{(q+1)(n-1-\gamma -q)} q^{q(1-n+\gamma+\gamma q)} 2^{2(n-1- (n- \gamma))}
\\
  && =(\lfloor{\frac{n-1}{\gamma}}\rfloor+1)^{(\lfloor{\frac{n-1}{\gamma}}\rfloor+1)(n-1- \gamma \lfloor{\frac{n-1}{\gamma}}\rfloor)} \lfloor{\frac{n-1}{\gamma}}\rfloor  ^{\lfloor{\frac{n-1}{\gamma}}\rfloor(1-n+\gamma + \gamma \lfloor{\frac{n-1}{\gamma}}\rfloor) 2^{2(\gamma -1)}}.
\end{eqnarray}

{\small\bf Case 2:} $- \gamma +1 \leq k-p \leq 0$.

Note that $\frac{n-1}{\gamma} - 1 \leq \frac{n-\gamma}{ \gamma} \leq \frac{n-1+k-p}{\gamma} \leq \frac{n-1}{\gamma}$.
Let $n-1 =  Q\gamma +R$, where $0 \leq R \leq \gamma -1$. We pay our attention on the case of $0 \leq R \leq \gamma -2$ firstly. Note that $g(k-p)$ is an incresing function on these two intervals of the relations (\ref{27}) and (\ref{28}), for $0 \leq R \leq \gamma-2$. Thus, $g(k-p)$ arrives at the minimum value for either $k-p = \gamma \lfloor{\frac{n-1}{\gamma}}\rfloor - n +1$ or $k-p = - \gamma +1$ (If $R = \gamma -1$, then $\gamma \lfloor{\frac{n-1}{\gamma}}\rfloor - n +1 = -\gamma +1$).

Note that $\gamma \lfloor{\frac{n-1}{\gamma}}\rfloor - n +1 \geq n-1-\gamma-n+1 \geq -\gamma$, $\frac{n-\gamma}{\gamma} = \lfloor\frac{n-1}{\gamma}\rfloor + \frac{R-\gamma+1}{\gamma}$ and $0 \leq R \leq \gamma -2$.
By combing above relations,
\begin{eqnarray}
\label{295}
\nonumber \frac{g(\gamma \lfloor{\frac{n-1}{\gamma}}\rfloor - n +1)}{g(-\gamma +1)}
  && = (\frac{(1^1)/(2^2)}{Q^Q/(Q+1)^{(Q+1)}})^{((\gamma \lfloor{\frac{n-1}{\gamma}}\rfloor - n +1)) - (- \gamma +1))}\\
 && \nonumber = (\frac{1^1/2^2}{Q^Q/(Q+1)^{(Q+1)}})^{((\gamma (\frac{n-\gamma}{\gamma} - \frac{R-\gamma+1}{\gamma}) - n +1)) - (- \gamma +1))}\\
&& \nonumber = (\frac{1^1/2^2}{Q^Q/(Q+1)^{(Q+1)}})^{(\gamma - R -1)}\\
  &&  \geq  (\frac{1^1/2^2}{Q^Q/(Q+1)^{(Q+1)}})^{2 \cdot 1} \geq 1.
\end{eqnarray}

Finally, we need to compare these two minimum values $g(-\gamma+1)$ and $g(0)$. Since $-\gamma+1 \leq 0$ and $g(k-p)$ is increasing, then
 the smallest value of $g(k-p)$ is arrived at $k-p = - \gamma +1$ and for $k=0,$ $p= \gamma-1$ and $l = n - \gamma$. Hence, Theorem \ref{up11} is true and the equality holds if and only if $G \in \mathcal{D}_{n,\gamma}$.
\end{proof}

At the last part, we consider the case of $\frac{n}{3} \leq \gamma \leq \frac{n}{2}$, for a tree $G$ with $n$ vertices and domination number $\gamma$.

\begin{thom}
\label{311}
Let $G$ be a tree of $n$ vertices and domination number  $\frac{n}{3} \leq \gamma \leq \frac{n}{2}$. Then
\begin{eqnarray}
\nonumber
\prod_2(G) \geq
\begin{cases}
4^{n-2}, &\gamma = \lceil{\frac{n}{3}}\rceil,
 \cr 4^{3n-6\gamma+2} 27^{3\gamma - n -2}, &\frac{n+3}{3} \leq \gamma \leq \frac{n}{2}, \end{cases}
\end{eqnarray}
 where the equality holds if and only if $G \cong L_{n,\gamma}$.
\end{thom}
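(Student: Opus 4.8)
The plan is to run the argument of Theorem \ref{310} verbatim, but with the lower-bound function $g(k-p)$ of (\ref{221}) in place of the upper-bound function $f(k-p)$ of (\ref{22}), and with Proposition 2 in place of Proposition 1 (so that ``$x^x/(x+m)^{x+m}$ decreasing'' supplies monotonicity of $g$ instead of monotonicity of $f$). First, for $\gamma=\lceil n/3\rceil$ the extremal tree is $P_n$, as recalled at the start of Section 4, and one computes directly
\[
\prod_2(P_n)=\prod_{v\in V(P_n)}d(v)^{d(v)}=1^{1}\cdot 1^{1}\cdot(2^{2})^{\,n-2}=4^{\,n-2},
\]
settling that case. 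So assume $\frac{n+3}{3}\le\gamma\le\frac n2$, whence $2\gamma\le n\le 3\gamma-3$, so $\gamma\ge3$, $n\ge6$, and, exactly as in (\ref{123}), $q_{1}=\lfloor\frac{n-1+p-k}{n-\gamma}\rfloor\in\{1,2\}$; I would then split into the two cases $q_{1}=1$ and $q_{1}=2$.

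In the case $q_{1}=1$ one has $k-p\ge 2\gamma-n$ and $q=\lfloor\frac{n-1+k-p}{\gamma}\rfloor\in\{1,2,3\}$, with $q=1$ excluded because it forces $G\cong P_{n}$ (impossible since $\gamma\ge\frac{n+3}{3}$), so $q\in\{2,3\}$. On each subinterval where $q$ is constant, $g(k-p)$ has the shape $c_{q}^{\,k-p}\cdot(\mathrm{const})$ with $c_{q}=\dfrac{1^{1}/2^{2}}{q^{q}/(q+1)^{q+1}}$; by Proposition 2 the quantity $q^{q}/(q+1)^{q+1}$ is nonincreasing in $q$, hence $q^{q}/(q+1)^{q+1}\le 2^{2}/3^{3}=\tfrac{4}{27}$ for $q\ge2$ and therefore $c_{q}\ge\tfrac{1/4}{4/27}=\tfrac{27}{16}>1$, i.e.\ $g(k-p)$ is increasing in $k-p$. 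Comparing the $q=2$ piece with the $q=3$ piece exactly as around (\ref{38})--(\ref{39}) shows that the minimum of $g$ is attained on the $q=2$ piece, hence at the least admissible value of $k-p$. Using the structural identities $n_{3}=n_{1}-2$ and $k-p=n_{1}+2\gamma-n-1$ derived in the proof of Theorem \ref{310} (they depend only on the degree pattern $\{1,2,3\}$, not on the index) together with $n_{1}\ge 3\gamma-n$ (Lemma 1.1), the least admissible value is $k-p=5\gamma-2n-1$, attained when $n_{1}=3\gamma-n$; by (\ref{15}) and (\ref{19}) this gives $k=0$, $p=\gamma-1$, $l=n-\gamma$.

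In the case $q_{1}=2$ one has $k-p\le 2\gamma-n-1<0$ and $q=1$; now the roles of $D$ and $\overline D$ are interchanged, the vertices of $\overline D$ having degrees $2,3$ and those of $D$ degrees $1,2$, and on the resulting single regime $g(k-p)=c^{\,k-p}\cdot(\mathrm{const})$ with $c=\dfrac{2^{2}}{1^{1}}\cdot\dfrac{2^{2}}{3^{3}}=\dfrac{16}{27}<1$, so $g$ is decreasing and its minimum sits at the greatest admissible $k-p$; the identity $k-p=2\gamma-n-n_{1}+1$ together with $n_{1}\ge 3\gamma-n$ (Lemma 1.1) again pins this to $n_{1}=3\gamma-n$, i.e.\ $k-p=-\gamma+1$, $k=0$, $p=\gamma-1$, $l=n-\gamma$. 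In either case the minimising tree has $3\gamma-n-2$ vertices of degree $3$, $3n-6\gamma+2$ of degree $2$ and $3\gamma-n$ of degree $1$, so
\[
\prod_{2}(G)=\prod_{v}d(v)^{d(v)}=(3^{3})^{\,3\gamma-n-2}\,(2^{2})^{\,3n-6\gamma+2}\,(1^{1})^{\,3\gamma-n}=4^{\,3n-6\gamma+2}\,27^{\,3\gamma-n-2},
\]
the asserted bound; tracing equality back through (\ref{221}), (\ref{211}), (\ref{201}) shows it holds exactly when this degree sequence is realised, and, since a vertex with two pendent neighbours lies in every minimum dominating set, when every vertex of $G$ has at most one pendent neighbour — that is, when $G$ meets the defining condition (i) or (ii) of $L_{n,\gamma}$, so $G\cong L_{n,\gamma}$.

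I expect the main obstacle to be the floor-function bookkeeping and, especially, verifying that the local minima of $g$ on the ``non-governing'' subintervals (the $q=3$ piece in the first case, and the excluded $q=1$, $p-k=n-2\gamma+1$ configuration in the second) are never smaller than the minimum on the governing subinterval, so that the global minimum genuinely occurs at the extreme value of $k-p$ permitted by Lemma 1.1 — this is the exact analogue of the comparison $f(3\gamma-n)/f(3\gamma-n+1)=16/9>1$ used in Theorem \ref{310}. Everything else is parallel to Theorem \ref{310}; the only new inputs are Proposition 2 (converting the monotonicity of $x^{x}/(x+m)^{x+m}$ into the monotonicity of $g$) and the final substitution of the degree sequence of $L_{n,\gamma}$ into $\prod_{2}=\prod_{v}d(v)^{d(v)}$.
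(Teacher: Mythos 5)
Your proposal is correct and follows essentially the same route as the paper's own proof: the case split on $q_1\in\{1,2\}$, the monotonicity of $g(k-p)$ via Proposition 2, the cross-piece comparison showing the $q=2$ regime governs, the structural identities $n_3=n_1-2$ and $k-p=n_1+2\gamma-n-1$ (resp.\ $k-p=2\gamma-n-n_1+1$) combined with Lemma 1.1 to pin $n_1=3\gamma-n$, and the final evaluation $27^{3\gamma-n-2}4^{3n-6\gamma+2}$ with the $L_{n,\gamma}$ characterization all match. The only divergence is cosmetic: the paper's displayed exponent $(27/16)^{2(k-p)}$ is a typo inherited from the $\prod_1$ computation, and your first-power exponent $c_q^{\,k-p}$ is the one consistent with the paper's own general formula.
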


\begin{proof}
We proceed on $g(k-p)$ and determine its minimum.
If $\gamma = \lceil{\frac{n}{3}}\rceil$, $P_n$ is a path. Thus, $\prod_2(G)$ is minimal and Theorem \ref{311} is true. Here we consider the case of $\gamma \geq \frac{n+3}{3}$.

Note that $2\gamma \leq n \leq 3\gamma -3$ yields $\gamma \geq 3$ and $n \geq 6$.
By the relation (\ref{123}), we need to consider two cases below.

{\bf \small Case 1:} $q_1 = \lfloor{\frac{n-1+p-k}{n-\gamma}}\rfloor = 1$.

Note that $1  \leq \frac{n-1+p-k}{n-\gamma} < 2$ yields $p - k < n - 2\gamma +1,$ i.e., $$k-p \geq 2\gamma -n.$$
If $2\gamma - n \leq -1,$ then $2 \leq \frac{n-1}{\gamma} \leq 3 \frac{n-1}{n+3} < 3$. Thus, $\lfloor{\frac{n-1}{\gamma}}\rfloor = 2,$ for $2\gamma - n \leq -1$.
Now we first consider $2\gamma -n \leq k-p \leq 0$. By the same ideas of (\ref{27}) and (\ref{28}), we have that the relations (\ref{32}) and (\ref{33}) hold. Similar to (\ref{34}), we obtain that
\begin{eqnarray}
\label{34110}
\nonumber g(k-p) && = (2+1)^{(2+1)(n-1+k-p-2\gamma)} 2^{2(\gamma - n +1 - (k-p) +2\gamma)} (1+1)^{2(n-1 +(p-k) - (n-\gamma))} 1
\\
  &&  =(27/16)^{2(k-p)} 3^{3(n-1-2\gamma)} 2^{2(4\gamma-n)},~\text{for}~ 2\gamma -n+1 \leq k-p \leq 0.
\end{eqnarray}

Next assume that $0 \leq k-p \leq \gamma -1.$ By the same ideas of the relation (\ref{23}) and (\ref{24}), we have the relations (\ref{35}) and (\ref{36}) hold.
By the relations (\ref{221}), (\ref{35}) and (\ref{36}), we have

 \be
\label{3711}
g(k-p) =  (27/16)^{2(k-p)} 3^{3(n-1-2\gamma)} 2^{2(4\gamma-n)}, ~\text{for}~ 0 \leq k-p \leq 3\gamma  - n,
\ee
and
\be
\label{3811}
g(k-p) =  16^{k-p} 2^{6n- 16\gamma -8}, ~\text{for}~ 3\gamma  - n +1 \leq k-p \leq \gamma - 1.
\ee

Togethering with the relations (\ref{34110}) and (\ref{3711}), we have
\begin{eqnarray}
\label{3911}
 f(k-p)   =(27/16)^{2(k-p)} 3^{3(n-1-2\gamma)} 2^{2(4\gamma-n)},~\text{for}~ 2\gamma -n+1 \leq k-p \leq 3\gamma -n.
\end{eqnarray}

By the relations  (\ref{3811}) and (\ref{3911}), we have  $\frac{f(3\gamma -n)}{f(3\gamma -n+1)} <  1$.
Since the maximal value $f(3\gamma -n)$ of (\ref{3911}) is smaller than the minimum value $f(3\gamma -n+1)$ of the relation (\ref{3811}). In order to find the minimum value of $g(k-p)$, we should consider the relation (\ref{3911}) only.

To find the sharp lower bound of $\prod_2(G)$, where $G$ is a tree with  $n$ vertices and domination n   number $\gamma$. It is enough to find the minimum realizable value of $k-p$, such that the corresponding tree exists. We will proceed on these steps below.

 First, note that an extreme tree $G$ with a minimum $\prod_2(G)$ contains vertices of degree $1, 2$ or $3$.
By the above considerations, any minimal dominating set $D$ has $n_3$ verices of degree $3$ and $n_2$ vertices of degree $2$, i.e., $n_2+n_3 = \gamma$. Also, the set $V(G) \setminus D$ has $n_1$ vertices of degree $1$ and $\overline{n_2}$ vertices of degree $2$, i.e., $\overline{n_2} + n_1 = n- \gamma.$

As $n = n_1+n_2+\overline{n_2}+n_3,$ the relation $(1)$ can be written as $n_1+2(n_2+\overline{n_2}) + 3n_3 = 2(n_1+n_2+\overline{n_2}+n_3) - 2$. Thus,
 \begin{eqnarray}
\label{4000}
n_3 = n_1 -2.
\end{eqnarray}

Combining these relations, we have $n_2 - \overline{n_2} = 2\gamma - n +2$. By using  (\ref{4000}), the relations  (\ref{16}) and  (\ref{17}) could be $n-1+k-p = 2n_2 +3n_1-6$ and $n-1+p-k = 2\overline{n_2}+n_1$. Thus, $k-p= n_1+2\gamma-n-1$.

 Thus,  the function  (\ref{3911}) can be expressed as
  \begin{eqnarray}
\label{4111}
g(n_1) =2^{4n_1+2n-8\gamma -12}, ~\text{for}~ 2 \leq n_1 \leq \gamma+1.
\end{eqnarray}

Now we turn to the case of $2\gamma - n = 0$, i.e., $\gamma = n/2$ if $n$ is even. Then $\lfloor{\frac{n-1}{\gamma}}\rfloor = 1$ and similar to the relations  (\ref{23}) and  (\ref{24}), we have $q = \lfloor{\frac{n-1+k-p}{\gamma}}\rfloor = \lfloor{\frac{n-1}{\gamma}}\rfloor = 1$ for $k-p=0$ and $q = \lfloor{\frac{n-1}{\gamma}}\rfloor +1 =2,$ for $1 \leq k-p \leq \frac{n}{2} -1.$
Recall that at the same time $q_1 = 1$ and, consequently, it has to be $q =2$ (since for $q = 1, T \cong P_n,$ a contadiction, as $\gamma \geq \frac{n+3}{3}$).

By the same method above, we have $g(n_1) =  2^{4n_1+2n-8\gamma -12},$ for $2 \leq n_1 \leq \frac{n}{2}$.
Thus, we should find the minimal value of $n_1$ such that there exists such trees with n vertices and domination number $\gamma$ with
\begin{eqnarray}
\label{42}
\frac{n+3}{3} \leq \gamma \leq \frac{n}{2}.
\end{eqnarray}
 Note that the vertices from any dominating set $D$ of $G$ have degrees 2 and 3, and the vertices $\overline{D}$ have degrees 1 and 2.
By Lemma 1.1, we have $n_1 \geq 3\gamma -n.$ Then the minimal possible value of $f(n_1)$ is achieved for $n_1 = 3\gamma-n$,
 i.e., $k-p = 5\gamma -2n-1$ and $g(5\gamma-2n-1) = 4^{3n-6\gamma+2} 27^{3\gamma - n -2}$. In addition, the extreme graphs of achieving the equality in Theorem \ref{311} belong to $\mathcal{L}(n,\gamma)$.

{\bf \small Case 2:} $q_1 = \lfloor{\frac{n-1+p-k}{n-\gamma}}\rfloor = 2.$

Note that $2 \leq \frac{n-1+p-k}{n-\gamma} < 3$ yields that $p-k \geq n-2\gamma +1$, that is $k-p \leq 2\gamma -n -1 < 0$. Also, $1 \leq \frac{n-1-\gamma+1}{\gamma} \leq \frac{n-1+k-p}{\gamma} \leq \frac{n-1+2\gamma-n-1}{\gamma} = 2 \frac{\gamma -1}{\gamma} < 2$ implies that $q = \lfloor{\frac{n-1+k-p}{\gamma}}\rfloor = 1.$

For $p-k = n - 2\gamma+1$ and any minimal dominating set $D$, $\frac{n-1+p-k}{n-\gamma} = 2$ and all vertices of $\overline{D}$ are degree of 2. If the vertices of $D$ are degree of $1$  or  $2$, then $T \cong P_n$ and it is contradicted with the assumption.
By $\gamma \geq \frac{n+3}{3}$, we have $p-k \geq n-2\gamma+2,$ i.e., $k-p \leq 2\gamma - n -2$.

Thus, we have $$g(k-p) = (16/27)^{(k-p)} 2^{2(3n-4\gamma)} 3^{3(2\gamma - n-1)},~  \text{for}~ -\gamma +1 \leq k-p \leq 2\gamma - n - 2.$$ Next we need to determine the maximal realization of $k-p$ such that the related tree exists. Here we will proceed in by the same method of previous case.
Let $n_1, n_2$ be the number of vertices of degrees 1 and 2, respectively. By the routinely procedure, we have
$n_2- \overline{n_2} = 2\gamma - n -2$ and $k - p= 2\gamma - n - n_1+1.$

Now the function $g(k-p)$ can be writen as
 \begin{eqnarray}
\label{4311}
g(n_1) = (27/16)^{n_1} 2^{2(n+2)} 3^{2(-3)}, \text{ for } 3\leq n_1 \leq 3\gamma - n.
\end{eqnarray}

By Lemma 1.1, we have $n_1 \geq 3\gamma - n$. Since $g(n_1)$ is an increasing function, then $n_1 = 3\gamma -n$ is the unique one such that $g(n_1)$ is minimal. Then
there is a corresponding tree with $n$ vertices and domination number $\frac{n+3}{3} \leq \gamma \leq \frac{n}{2}$ such that the vertices in any minimal dominating set $D$ have degrees $1$ and $2$, and the vertices in $\overline{D}$ have degrees $2$ and $3$.

Thus, $f(3\gamma-n) = 4^{3n-6\gamma+2} 27^{3\gamma - n -2}$ is the unique value and is the maximal value of $f(k-p)$ in Case 1. Now that $n_1 = 3\gamma - n$ yields that  $k-p = - \gamma +1$. By the relations (\ref{15}) and (\ref{19}), we have $k=0, p=\gamma-1$ and $l = n-\gamma$.

By the definition of the domination number, a vertex with more
than one pendent neighbor belongs to every minimum dominating set
of a tree, implying that every vertex in a tree T, obtained as
described above, has at most one pendent neighbor. By previous
considerations, the resulting extremal trees, for which equality
holds in \ref{311}, belong to the graphs in Definition 3.2 (ii).

This completes the proof.
\end{proof}

\vskip4mm\noindent{\bf Acknowledgements.}


 The work was partially supported by the
National Science Foundation of China under Grant nos.  11271149, 11371162 and 11601006, the Natural
Science Foundation for the Higher Education Institutions of Anhui
Province of China under Grant no. KJ2015A331. Also, it was
partially supported by the Self-determined Research Funds of CCNU
from the colleges basic research and operation of MOE.

\end{document}